\newcommand{\rojo}[1]{\textcolor[rgb]{1.00,0.00,0.00}{#1}}
\newcommand{\prt}[1]{\langle #1 \rangle}
\newcommand{\ZZ}{\mathbb{Z}}
\newcommand{\NN}{\mathbb{N}}
\newtheorem{defi}{Definition}
\newtheorem{teo}{Theorem}
\newtheorem{exem}{Example}
\newtheorem{obs}{Remark}
\newtheorem{prop}{Proposition}
\newtheorem{lem}{Lemma}
\newtheorem{coro}{Corollary}
\begin{document}


\title{Aggregation functions on  $n$-dimensional ordered vectors  equipped with an  admissible order  and an application in  multi-criteria group decision-making}

\author{Thadeu Milfont and Ivan Mezzomo\\ \{thadeuribeiro,imezzomo\}@ufersa.edu.br \\ Departamento de Ci\^encias Naturais, Matem\'atica e Estat\'istica -- DCME \\ 
Universidade Federal Rural de Semi-\'Arido -- UFERSA\\
Mossor\'o, Rio Grande do Norte, Brazil\\[0.4cm] Benjam\'in Bedregal\\bedregal@dimap.ufrn.br\\Departamento de Inform\'atica e Matem\'atica Aplicada - DIMAp \\ Universidade Federal do Rio Grande do Norte - UFRN\\[0.4cm] Edmundo Mansilla\\edmundo.mansilla@umag.cl\\ Departamento de Matem\'aticas y F\'isica-- DMF \\ Universidad de Magallanes -- UMAG\\ Punta Arenas,  Chile \\[0.4cm]  Humberto Bustince \\ bustince@upna.es \\ Departamento de Estad\'istica,  Inform\'atica e Matem\'aticas \\
    Universidad Publica de Navarra \\
    Pamplona, Navarra,  Spain}

%
%
%
%
%
%

\maketitle

\begin{abstract}
$n$-Dimensional fuzzy sets are a fuzzy set extension where the membership values are $n$-tuples of real numbers in the unit interval $[0,1]$ increasingly ordered, called $n$-dimensional intervals. The set of $n$-dimensional intervals is denoted by $L_n([0,1])$. This paper aims to investigate semi-vector spaces over a weak semifield and aggregation functions concerning an admissible order on the set of $n$-dimensional intervals and the construction of aggregation functions on $L_n([0,1])$ based on the operations of the semi-vector spaces. In particular, extensions of the family of OWA and weighted average aggregation functions are investigated. Finally, we develop a multi-criteria group decision-making method based on $n$-dimensional aggregation functions with respect to an admissible order and give an illustrative example.
\end{abstract}

%

\paragraph{[Keywords:} $n$-dimensional fuzzy sets, ordered semi-vector spaces, admissible orders, aggregation function, ordered weighted averaging, decision making.

\section{Introduction}

A mathematical function is a rule, which takes one or several inputs and returns an output such that for each input has a unique output associated to it,  average aggregation functions are functions $A: [0,1]^n \rightarrow [0,1]$ with special properties that takes real inputs from a closed interval $[0,1]$ and aggregates them in a single real value   which, in some sense, represents all the other values. One example of aggregation function is the   ordered weighted averaging (OWA) introduced by Yager in \cite{Yag88}.
Aggregation functions are widely used in physics and natural sciences such as pure and applied mathematics, computer science, engineering, economics, among others. An extensive study on average aggregation functions can be found in \cite{Beliakov-Bustince,Calvo}.

In recent years the interest of researchers in generalizations and extensions of fuzzy set theory \cite{Zadeh} has grown, considering both the different ways of interpreting the uncertainty of phenomena and the scope of the problem. In \cite{Bustince3} it is given a historical and hierarchical analysis of the most important of such extensions, among which we can mention the interval-valued fuzzy sets \cite{Peka19,Sambuc75}, hesitant fuzzy sets \cite{Rosa,Torra}, $L$-fuzzy sets \cite{Goguen} and $n$-dimensional fuzzy sets \cite{Benja2,Shang,Zanotelli20}.

In particular, this last extension considers as membership degrees  $n$-dimensional ordered vectors of real numbers in $[0,1]$, i.e.  elements of  $L_n([0,1]) =\{ (x_1, \ldots, x_n) \in [0,1]^n \ | \ x_1 \leq \cdots \leq x_n \}$. This kind of extension arises, for example, in situations where $n$ experts (human or not) provide their membership degrees of each element of the discourse universe to the same fuzzy set, and their identity is irrelevant (and therefore all they have the same weight) or yet necessary.
 Later, Bedregal et al. in \cite{Benja2} have investigated the $m$-ary $n$-dimensional aggregation functions with respect to the natural partial order (also known as product order) on the set $L_n([0,1])$ and in particular  the $n$-dimensional  t-norms. 
 
 Admissible orders are orders which refine the ``natural'' order on  a set of membership degrees in a fuzzy extensions. Originally they were introduced in the context of interval-valued fuzzy sets by H. Bustince et al. in \cite{Bustince1} and since then they have been widely used  \cite{Bustince20,Costa20,Santana20,Zapata}. Lately, such notion was studied in other types of fuzzy sets, such as interval-valued intuitionistic fuzzy sets \cite{Silva,Laura1,Laura2}, hesitant fuzzy sets \cite{Monica20,Monica21}, multidimensional fuzzy sets \cite{Annaxsuel20} and $n$-dimensional fuzzy sets \cite{Laura3}. 


The main contribution of this paper is to introduce and study the notion of n-dimensional aggregation functions with respect to an admissible order on  $L_n([0,1])$ of the $n$-dimensional ordered vectors on $[0,1]$. In particular, in order to extend the OWA and the weighted average aggregation function to this context, we must consider addition and product by an scalar (the weights) in $[0,1]$. Nevertheless, in order to preserve the most basic properties of the OWA and many other OWA-like operators, it is necessary  some kind of compatibility between  the admissible order  and  both operations. With this in mind, it was introduced the notion of ordered semi-vector spaces over a weak semifield, which is  generalization of ordered vector spaces \cite{Cri76}. Finally, $n$-dimensional  aggregation functions with respect to an admissible order were used to  generate a score in $L_n([0,1])$, where $n$ corresponds to the number of experts, for each alternative in a multiple criteria group decision making  problem and thereby a ranking of the  alternatives based on these scores and the admissible order.


%

To do so, the paper is organized as follows: some important concepts related to  aggregation functions and $n$-dimensional fuzzy sets are recalled in Section 2. Next section is devoted to introducing the notion of ordered semi-vector spaces over weak semifields in general and for the particular case of the $L_n([0,1])$ equipped with an admissible order. In Section 4  $n$-dimensional aggregation functions over  admissible orders are considered and it is proposed an OWA like operator for the admissibly ordered semi-vector space with $L_n([0,1])$ as the underlying set studied in previous section. Section 5 brings a multiple criteria and multiple expert decision making method based on $n$-dimensional aggregation functions over  admissible orders joint with an illustrative example.  Finally, Section 6 presents the final remarks and discusses further works.

\section{Preliminary Concepts}

In this section we recall some important concepts for our proposal in this work. For a detailed overview on these topics, we recommend \cite{Benja2,Beliakov-Bustince,Calvo,Shang}.

In the whole text we are considering $\mathbb{N}^*$ as the set of natural elements without the zero and $N^n_m=\{m,m+1,m+2, \dots,n\}$ for each $m,n\in \mathbb{N}^*$ such that $m\leq n$.
In addition, $\omega=(w_1,\ldots,w_n) \in [0,1]^{n}$ such that $\sum\limits_{i=1}^{n} w_i=1$ is called a  weighting vector. $\omega$ is strictly positive if  $w_i>0$ for each $i\in N_1^n$.

\subsection{$n$-Dimensional fuzzy sets}

In this subsection, we make a review of $n$-dimensional fuzzy sets over a nonempty referential set $X$. These sets arise as a natural generalization of the interval-valued fuzzy sets \cite{Peka19,Sambuc75}, three-dimensional fuzzy sets \cite{Li09} and interval-valued Atanassov intuitionistic fuzzy sets \cite{AG89} which mathematically are equivalent to four-dimensional fuzzy sets. This generalization is useful in situations where $n$ experts provide an evaluation of how much an alternative or element satisfies some criteria or condition and the identity of the expert (i.e. the order in which the evaluations are provided) is not relevant.

\begin{defi} \cite{Shang}
	Let $X$ be a non-empty set and $n \in \mathbb{N}^{*}$. According to \cite{Shang}, a set
	\begin{equation*}
	A = \{(x, \mu^{1}_{A}(x),\dots,\mu^{n}_{A}(x))\mid x \in X\}
	\end{equation*}
	is called an $n$-dimensional fuzzy set on $X$, given that for all $i \in N_1^n$ the membership functions $\mu^{i}_{A} : X \rightarrow [0,1]$ satisfy $\mu^{1}_{A}(x) \leq \cdots \leq \mu^{n}_{A}(x)$.
\end{defi}

 Bedregal et al. in \cite{Benja1} have defined the set
\begin{equation} \label{eqn: n-dimensional}
L_{n}([0,1]) = \{ (x_{1}, \dots, x_{n}) \in [0,1]^{n} \mid x_{1} \leq x_{2} \leq \dots \leq x_{n}\}.
\end{equation}
Since $L_{1}([0,1]) = [0,1]$ and $L_{2}([0,1]) = L([0,1])$ (the closed subintervals of the unit
	interval $[0,1]$), the elements of  $L_{n}([0,1])$ are called $n$-dimensional intervals (see \cite{Benja2}).

\begin{obs} \label{degem}
	The $i$-th projection on $L_{n}([0,1])$, for $i\in N_1^n$, is the function $\pi_{i} : L_{n}([0,1]) \rightarrow [0,1]$  given by $\pi_{i}(x_{1}, \dots, x_{n}) = x_{i}$. For each $x \in [0, 1]$ the element $(x, \ldots, x) \in L_{n}([0,1])$ is denoted by $/x/$ and it is called a degenerate element. The set of all degenerate elements in $L_{n}([0,1])$ is denoted by $\mathcal{D}_{n}$.
\end{obs}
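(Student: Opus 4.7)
The statement labeled as Remark \ref{degem} is not a proposition that requires proof in the usual sense; it is purely notational, introducing three objects associated with $L_n([0,1])$: the coordinate projections $\pi_i$, the degenerate tuple $/x/$, and the set $\mathcal{D}_n$ of all such tuples. So my ``proof plan'' amounts to checking that each piece of notation is well posed, i.e., that the maps have the stated codomains and that the named subsets actually sit inside $L_n([0,1])$.

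First, for the projections, I would note that by the definition of $L_n([0,1])$ given in equation~(\ref{eqn: n-dimensional}) we have $L_n([0,1]) \subseteq [0,1]^n$, so for any $(x_1,\dots,x_n) \in L_n([0,1])$ and any $i \in N_1^n$ the component $x_i$ lies in $[0,1]$; hence $\pi_i : L_n([0,1]) \to [0,1]$ is a well-defined function. Next, to justify writing $/x/ \in L_n([0,1])$ for $x \in [0,1]$, I would verify the chain of inequalities $x \leq x \leq \cdots \leq x$, which holds trivially by reflexivity of $\leq$ on $[0,1]$, so $(x,\dots,x)$ belongs to the set defined in (\ref{eqn: n-dimensional}). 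Finally, $\mathcal{D}_n = \{/x/ \mid x \in [0,1]\}$ is then a bona fide subset of $L_n([0,1])$, naturally in bijection with $[0,1]$ via the diagonal map $x \mapsto /x/$ (with inverse any $\pi_i$ restricted to $\mathcal{D}_n$).

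The main ``obstacle'' here is conceptual rather than technical: there is no real theorem to prove. In the writeup I would therefore keep the verification to a single line, perhaps absorbing it into the remark itself, and instead use the paragraph to record useful consequences for later sections, such as the fact that $\pi_i \circ (x \mapsto /x/) = \mathrm{id}_{[0,1]}$ and that $\pi_i$ is monotone with respect to both the product order and any admissible refinement of it, since these are the properties that will be invoked when building $n$-dimensional aggregation functions in Section 4.
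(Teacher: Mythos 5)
You are right that this remark is purely definitional and the paper offers no proof of it; your well-posedness check (components of elements of $L_n([0,1])$ lie in $[0,1]$, and $x\leq\cdots\leq x$ holds trivially so $(x,\ldots,x)\in L_n([0,1])$) is correct and is exactly all that could be said. Nothing is missing.
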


Shang, Yuan and Lee, in \cite{Shang}, considered the following partial order on $L_{n}([0,1])$:
\begin{equation} \label{eqn: n-ordem}
\textbf{x} \leq_{n}^{p} \textbf{y} \Leftrightarrow \pi_{i}(\textbf{x}) \leq \pi_{i}(\textbf{y}) \text{ for each } i \in N_1^n,
\end{equation}
called the product order.

The poset $\langle L_{n}([0,1]) , \leq_{n}^{p} \rangle$ is a continuous lattice \cite{Benja2} and hence a complete distributive lattice, having $/0/$ and $/1/$ as its smallest and greatest elements, respectively (see \cite{Gierz, Scott}). In particular, the infimum and supremum of a set $S\subseteq L_n([0,1])$ in this lattice,  denoted by $\bigwedge S$ and $\bigvee S$, are given by

$$\bigwedge S=(\min\limits_{\mathbf{x}\in S} \pi_1(\mathbf{x}),\ldots,\min\limits_{\mathbf{x}\in S} \pi_n(\mathbf{x}))$$
and
$$\bigvee S=(\max\limits_{\mathbf{x}\in S} \pi_1(\mathbf{x}),\ldots,\max\limits_{\mathbf{x}\in S} \pi_n(\mathbf{x}))$$

In \cite{Laura3}, De Miguel et al. have introduced the concept of an admissible order on $L_n([0,1])$ with respect to the product order $\leq_{n}^{p}$.

\begin{defi}
 A total order $\preceq$ on $L_{n}([0,1])$ is called admissible if for each $\mathbf{x},\mathbf{y}\in L_n([0,1])$ we have that $\mathbf{x}\preceq \mathbf{y}$ whenever $\mathbf{x}\leq_n^p \mathbf{y}$ .
 \end{defi}

\begin{exem} \label{ex-adm-orders} Let $\mathbf{x},\mathbf{y}\in L_n([0,1])$ and $\tau:N_1^n\rightarrow N_1^n$ be a bijection.
	\begin{enumerate}
	 \item \cite[Example 2]{Laura3}
	 	The reflexive closure of
	 $$\mathbf{x} \prec_{\tau} \mathbf{y}\Leftrightarrow \exists k\in N_1^n\mbox{ s.t. }\pi_{\tau(k)}(\mathbf{x})<\pi_{\tau(k)}(\mathbf{y})\mbox{ and }\forall i, 1\leq i < k, \pi_{\tau(i)}(\mathbf{x})=\pi_{\tau(i)}(\mathbf{y})$$
	 is an admissible order.
	 In particular, if  $\tau$ is the identity then $\preceq_\tau$ corresponds to the lexicographical order and if $\tau(i)=n-i+1$ then  $\preceq_\tau$ is the anti-lexicographical order.

	 \item Let  $\omega=(w_1,\ldots,w_n)$ be a weighting vector and $$F_\omega(\mathbf{x},\mathbf{y})=\sum\limits_{i=1}^n w_i \max(0,\pi_i(\mathbf{x})-\pi_i(\mathbf{y})).$$ Then $$\mathbf{x} \preceq^\omega_\tau \mathbf{y}\Leftrightarrow F_\omega(\mathbf{x},\mathbf{y}) < F_\omega(\mathbf{y},\mathbf{x})\mbox{ or }(F_\omega(\mathbf{x},\mathbf{y})=F_\omega(\mathbf{y},\mathbf{x})\mbox{ and }\mathbf{x} \preceq_{\tau} \mathbf{y}),$$
	 is an admissible order.

	 \item   Let $A$ be an $n$-ary  aggregation function. Then
	 $$\mathbf{x} \preceq_A^\tau \mathbf{y}\Leftrightarrow A(\mathbf{x}) < A(\mathbf{y})\mbox{ or }(A(\mathbf{x})=A(\mathbf{y})\mbox{ and }\mathbf{x} \preceq_{\tau} \mathbf{y})$$  is an admissible order.
	\end{enumerate}

\end{exem}

Since each admissible order $\preceq$ is total then any finite set $S\subseteq L_n([0,1])$ has a minimum and a maximum with respect to the order. We will denote such elements as $\bigcurlywedge S$ and $\bigcurlyvee S$, respectively, and as $\bigcurlywedge\limits_{j=1}^m \mathbf{x}_j$ and $\bigcurlyvee\limits_{j=1}^m \mathbf{x}_j$ when $S=\{\mathbf{x}_1,\ldots,\mathbf{x}_m\}$.

\section{Ordered Semi-Vector Spaces}

Semi-vector spaces were introduced in \cite{PS74} by Prem Prakash and Murat Sertel as  a set of elements, called vectors, that is closed under finite vector addition and scalar multiplication satisfying some conditions.  In this definition, it is considered as scalar set just the set of  non-negative real numbers with their usual addition and product and the property of existence of a neutral element for the vector addition is not considered. In   \cite{Kan93}, Vasantha Kandasamy introduced a more general and complete definition for semi-vector spaces  (SVS) and in this section we introduce definitions of SVS and ordered SVS which are related to a weak semifield instead of a semifield.

\subsection{Ordered semi-vector spaces over a weak semifield}

A semifield is an algebraic structure together with two binary operations, addition and multiplication. It is similar to a field but with some axioms removed  or relaxed. There are different non equivalent definitions of semifield (see for example \cite{HW96,Kan93,LMP15}). In particular, the given in \cite{Kan93} does not encompass the notion of field. In this paper we will adopt  a weak notion of semifield  in order to guarantee that each field is also a semifield.

\begin{defi} A weak semifield is an algebra $\mathcal{F}=\prt{F,\pluscirc,\cdot}$  satisfying the following axioms for each $r,s,t\in F$:
 \begin{enumerate}
  \item $r\pluscirc (s\pluscirc t)=(r\pluscirc s)\pluscirc t$ and $r\cdot (s\cdot t)=(r\cdot s)\cdot t$;
  \item $r\pluscirc s=s\pluscirc r$ and $r\cdot s=s\cdot r$;
  \item $r\cdot (s\pluscirc t)=(r\cdot s)\pluscirc(r\cdot t)$;
  \item $\exists 0\in F$ such that $r\pluscirc 0=r$;
   \item $\exists 1\in F$ such that $1\cdot r=r$.
 \end{enumerate}
 $F^+$ we will denote $F-\{0\}$.
\end{defi}
 Observe that weak semifields are fields where the inverses of the addition and multiplication are dropped, or in other words, commutative semirings (in the sense of \cite{Kan02}) with a multiplicative identity element.

\begin{defi} A semi-vector space  $\mathcal{V}$ over a weak semifield $\mathcal{F}=\prt{F,\pluscirc,\cdot}$ is a nonempty set $V$, equipped with two operations, $+:V\times V \rightarrow V$ and $\star:F\times V\rightarrow V$, fulfilling the following axioms, for each $x,y,z\in V$ and $r,s\in F$:

\begin{enumerate}[labelindent=\parindent,  leftmargin=*,label=\normalfont{(SV\arabic*)}]
  \item $x+(y+z)=(x+y)+z$;
  \item $x+y=y+x$;
\item $r\star(s\star x)= (r\cdot s)\star x$;
  \item $1\star x=x$;
  \item $r\star (x+y)=(r\star x)+(r\star y)$;
  \item $(r\pluscirc s)\star x= (r\star x) + (s\star x)$;
   \item $\exists \mathbf{0}\in V$ such that $\mathbf{0}+x=x$ for each $x\in V$.
 \end{enumerate}
\end{defi}

In a semi-vector space, $\mathcal{V}=\prt{V,+,\star}$ the elements of $V$ are called vectors, the operation $\star$ is called scalar multiplication and $+$ is the vector addition.

In a semi-vector space the  vector $\mathbf{0}$ in axiom (SV7) is called zero vector or additive identity  vector. The zero vector is unique.



In mathematics, an ordered vector space  is a vector space equipped with a partial order that is compatible with the vector space operations \cite{Cri76}. Clearly, this notion can be extended for semi-vector spaces. In \cite{GG99}, the notion of an ordered semi-vector space was introduced, but only considering the semifield of non-negative real numbers. Here we extend it for an arbitrary weak semifield.

\begin{defi} Let $\mathcal{V}=\prt{V,+,\star}$   be a semi-vector space over a weak semifield $\mathcal{F}=\prt{F,\pluscirc}$ and $\leq$ be a partial order on $V$. We say that $\leq$ is compatible with the scalar product of $\mathcal{V}$ if
\begin{enumerate}[start=8,labelindent=\parindent, leftmargin=*,label=\normalfont{(SV\arabic*)}]
  \item  $r\star x\leq r\star y$ for each $x,y\in V$ such that $x\leq y$ and $r\in F$,
 \end{enumerate}
and is compatible with the vector addition of $\mathcal{V}$ if
\begin{enumerate}[resume*,start=9,labelindent=\parindent, leftmargin=*,label=\normalfont{(SV\arabic*)}]
  \item $x+z\leq y+z$ for each $x,y,z\in V$ such that $x\leq y$.
 \end{enumerate}
In addition, the pair $\prt{\mathcal{V},\leq}$ is an ordered semi-vector space over $\mathcal{F}$ or, equivalently, $\leq$ is compatible with $\mathcal{V}$, if it is compatible with both, the scalar product and the vector addition of $\mathcal{V}$.
\end{defi}

Notice that for any semi-vector space $\mathcal{V}$ over an arbitrary weak semifield, the pair  $\prt{\mathcal{V},=}$  is an ordered semi-vector space, which is called trivial.

 The following proposition guarantees that each  semi-vector space determines a natural partial order which is compatible with the semi-vector space.

 \begin{prop}\label{pro-nat-ord}
   Let $\mathcal{V}=\prt{V,+,\star}$ be a semi-vector space and consider the binary relation defined for each $x,y\in  V$ as
\begin{equation}\label{eq-Nat-orde-SVS}
 x\leq_{V} y \Leftrightarrow \exists z\in V,\, x+z=y.
\end{equation}
If $\leq_V$ is antisymmetric, then  $\prt{\mathcal{V},\leq_{V}}$ is an ordered semi-vector space.
 \end{prop}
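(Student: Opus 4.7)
The plan is to verify three things: that $\leq_V$ is a partial order on $V$, that it is compatible with the scalar product (SV8), and that it is compatible with the vector addition (SV9). Antisymmetry is given as an assumption, so the order axioms reduce to reflexivity and transitivity, and the compatibility conditions should follow by unfolding the defining existential and using the semi-vector space axioms.

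First I would check that $\leq_V$ is reflexive and transitive. For reflexivity, axiom (SV7) together with commutativity (SV2) gives $x + \mathbf{0} = x$, so $\mathbf{0}$ witnesses $x \leq_V x$. For transitivity, suppose $x \leq_V y$ and $y \leq_V z$, so there exist $u, v \in V$ with $x + u = y$ and $y + v = z$. Then by (SV1), $x + (u + v) = (x + u) + v = y + v = z$, so $u + v$ witnesses $x \leq_V z$. Combined with the hypothesis of antisymmetry, this shows $\leq_V$ is a partial order.

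Next I would verify (SV8). Assume $x \leq_V y$, so there exists $z \in V$ with $x + z = y$, and let $r \in F$. By (SV5), $r \star y = r \star (x + z) = (r \star x) + (r \star z)$, so $r \star z$ witnesses $r \star x \leq_V r \star y$. Finally, for (SV9), assume $x \leq_V y$ with witness $u$ (so $x + u = y$) and let $z \in V$. Using (SV1) and (SV2), $(x + z) + u = (x + u) + z = y + z$, so $u$ witnesses $x + z \leq_V y + z$.

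There is no real obstacle here: the statement essentially says that the ``difference-exists'' preorder is automatically compatible with the algebraic structure, and once antisymmetry is assumed the remaining verifications are direct applications of (SV1), (SV2), (SV5), and (SV7). The only subtle point worth flagging is that nothing in the argument requires anything about the weak semifield $\mathcal{F}$ beyond the distributivity of $\star$ over $+$ in $V$, so the hypothesis that $\leq_V$ is antisymmetric is really the whole content of the proposition.
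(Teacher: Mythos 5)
Your proof is correct and follows essentially the same route as the paper's: reflexivity from (SV7)/(SV2), transitivity by composing witnesses via (SV1), and then (SV8) and (SV9) by exhibiting $r\star z$ and $u$ as witnesses using (SV5) and (SV1)--(SV2), respectively. The only difference is cosmetic — you name the witnesses directly rather than writing a chain of equivalences — and your closing observation that antisymmetry is the only substantive hypothesis matches the paper's own remark after the proposition.
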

 \begin{proof} First of all, observe that by (SV7) and (SV2), $\leq_V$ is reflexive. If $x\leq_V y$ and $y\leq_V z$, then by Eq. (\ref{eq-Nat-orde-SVS}), $x+u=y$ and $y+v=z$ for some $u,v\in V$. So, $x+(u+v)=z$ and therefore, $x\leq_V z$, i.e. $\leq_V$ is transitive.  Hence, $\leq_V$ is a partial order on $V$ whenever $\leq_V$ is antisymmetric.
  Let $\mathcal{V}=\prt{V,+,\star}$ be a semi-vector space , $x,y\in  V$. Then, we just need to prove (SV8) and (SV9). So,

  \begin{eqnarray*}
  x\leq_{V} y & \Leftrightarrow & \exists z\in V,\, x+z=y \ \ \ \mbox{by Eq. (\ref{eq-Nat-orde-SVS})} \\
                    & \Leftrightarrow & \exists z\in V,\,\forall r\in F,\, r\star (x + z) = r\star y \\
                    &\Leftrightarrow &  \exists z\in V,\,\forall r\in F,\, (r\star x) + (r\star z) = r\star y \ \ \ \mbox{by (SV5)}\\
                    & \Rightarrow &  \forall r\in F,\, r\star x \leq_V r\star y \ \ \ \mbox{by Eq. (\ref{eq-Nat-orde-SVS})}.
  \end{eqnarray*}
  Now, let $x, y, w\in V$. Then

  \begin{eqnarray*}
  x\leq_{V} y & \Leftrightarrow & \exists z\in V,\, x+z=y \ \ \ \mbox{by Eq. (\ref{eq-Nat-orde-SVS})}\\
                    & \Leftrightarrow & \exists z\in V,\,\forall w\in V,\, (x+z) + w = y + w \\
                    &\Leftrightarrow &   \exists z\in V,\,\forall w\in V,\,  (x+w) + z = y + w \ \ \ \mbox{by (SV1) and (SV2)}\\
                    & \Leftrightarrow &  \forall w\in V,\, x+w \leq_{V} y+ w \ \ \ \mbox{by Eq. (\ref{eq-Nat-orde-SVS})}.
  \end{eqnarray*}
  Therefore, $\prt{\mathcal{V},\leq_{V}}$ is an ordered semi-vector space.
 \end{proof}

Observe that $\leq_V$ is not always  antisymmetric and therefore, a partial order. For example,  in the semi-vector space $\mathcal{Z}=\prt{\ZZ,+,\cdot}$ over the weak semifield $\prt{\NN,+,\cdot}$, where $\ZZ$ is the set of integers, $\NN$ the set of natural numbers and $+$ and $\cdot$ are the usual addition and product operations, $x\leq_{\ZZ} y$ for each $x,y\in\ZZ$ and therefore $\leq_\ZZ$ is not antisymmetric. In fact,  $\leq_V$ is a partial order if and only if, there is no $u,v\in V-\{\mathbf{0}\}$ such that $u+v=\mathbf{0}$.

\subsection{Aggregation Functions}

In this subsection, we will define and give some examples of $n$-ary aggregation functions, which are functions that map  values in $[0,1]$ into a single value in $[0,1]$ which in some sense represents all of them.

\begin{defi}\label{def: agregacao}  \cite{Beliakov-Bustince}
	Let $n \in \mathbb{N}$ with $ n \geq 2$. A function $A: [0,1]^{n} \rightarrow [0,1]$ is an $n$-ary aggregation function if:
	\begin{description}
		\item[i)] $A(0,\dots,0) = 0$ and $A(1,  \dots,1) = 1$;
		\item[ii)] If $x_{i} \leq y_{i}$ for each $i\in N_1^n$, then $A(x_{1}, \dots, x_{n}) \leq A(y_{1}, \dots, y_{n})$.
	\end{description}
	In addition,
	\begin{description}
		\item[iii)]	$A$ is strict if for each $i\in N_1^n$ and $x_1,\ldots,x_n,y\in [0,1]$, $A(x_1,\ldots,x_n) < A(x_1,\ldots, x_{i-1},y,x_{i+1},\ldots,x_n)$ whenever $x_i < y$;
		\item[iv)] If there is a  positive real number $k$ such that for each $x_1,\ldots,x_n,\lambda\in [0,1]$, $A(\lambda x_1,\ldots,\lambda x_n)=\lambda^k A(x_1,\ldots,x_n)$ then $A$ is said to be homogeneous of order $k$; and
		\item[v)] $A$ is called internal if $A(x_1,\ldots,x_n)\in \{x_1,\ldots,x_n\}$ for each $x_1,\ldots,x_n\in [0,1]$.
		\end{description}
\end{defi}

An $m$-ary aggregation function $B$ dominates another $m$-ary aggregation function $A$, denoted by $A\leq B$, if $A(x_1,\ldots,x_m)\leq B(x_1,\ldots,x_m)$ for each $x_1,\ldots,x_m\in [0,1]$.

\begin{exem}\label{exem: aggregations}
	Let  $\omega=(w_1,\ldots,w_n)$  be an arbitrary weighting vector, $\vec{e}=(e_1,\ldots,e_n)$ be an  $n$-dimensional vector  of positive real numbers  and $r>0$ be a real number. Then the following functions are examples of $n$-ary aggregation functions.
	\begin{description}
	\item[(a)] $P^r:[0,1]^{n} \rightarrow [0,1]$ defined by
$$P^r(x_1,\ldots,x_n)=\frac{\left (\prod\limits_{i=1}^n (x_i^r+1)\right )- 1}{2^n-1}$$
		\item[(b)] $\min_\omega(x_1, \dots, x_n) = \arg\min(w_ix_i)$ 
		(Weighted min)
		\item[(c)] $\max_\omega(x_1, \dots, x_n) = \arg\max(w_ix_i)$ (Weighted max)
		\item[(d)]  $M_\omega(x_1, \dots, x_n) = \displaystyle\sum_{i = 1}^{n} w_ix_i$ (Weighted average)
		\item[(e)]  $G_\omega(x_1, \dots, x_n) = \displaystyle \prod_{i = 1}^{n} x_i^{w_i}$ (Geometric mean)
		\item[(f)] $\max_{\vec{e}}(x_1,\ldots,x_n)=\max(x_1^{e_1},\ldots,x_n^{e_n})$ (Exponentially distorted maximum)
		\item[(g)] $OWA_\omega(x_1,\ldots,x_n)= \displaystyle\sum_{i = 1}^{n} w_ix_{(i)}$ where $(x_{(1)},\ldots,x_{(n)})$ is a permutation of $(x_1,\ldots,x_n)$ such that $x_{(i)}\geq x_{(i+1)}$ for each $i\in N_1^{n-1}$ (Ordered weighted averaging)
	\end{description}
	In particular, $P^r$ is strict, $\min_\omega$, $\max_\omega$ and  $\max_{\vec{e}}$ are not strict and if $\omega$ is strictly positive then $M_\omega$ and $G_\omega$ are strict. In addition, $P^r$, $G_\omega$ and $\max_{\vec{e}}$ (with some $e_i\not\in\{0,1\}$) are not homogeneous of any order whenever $e_i\neq e_j$ for some $i,j\in \{1,\ldots,n\}$, but $\min_\omega$, $\max_\omega$ and $M_\omega$ are homogeneous of order 1.
	Moreover, we have that $\min_\omega\leq G_\omega\leq M_\omega \leq \max_\omega$.
\end{exem}

%
%

\subsection{Admissible ordered semi-vector spaces on $L_n([0,1])$}

It is easy to note that the algebra  $U=\prt{[0,1],\dotplus,\cdot}$ where $\cdot$ is the usual multiplication and  $\dotplus$ is the bounded addition, i.e. $r\dotplus s=\min(1,r+s)$ for each $r,s\in [0,1]$, is a weak semifield. As usual we will omit the $\cdot$ in expressions like $r\cdot s$ when it is clear.

Given $r\in [0,1]$ and $\mathbf{x},\mathbf{y}\in L_n([0,1])$, we define:
\begin{description}
 \item[Scalar product:] $r\odot \mathbf{x}=(r x_1,\ldots,r x_n)$;
 \item[Bounded addition:] $\mathbf{x}\circplus \mathbf{y}=(x_1\dotplus y_1,\ldots,x_n\dotplus y_n)$.
\end{description}

\begin{teo}\label{teo-SVS}
 $\mathcal{L}_n([0,1])=\prt{L_n([0,1]),\circplus,\odot}$ is a semi-vector space over the weak semifield  $U$. In addition, $\prt{\mathcal{L}_n([0,1]),\leq_n^p}$ is an ordered semi-vector space over $U$.
\end{teo}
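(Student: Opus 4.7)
The strategy is to verify every semi-vector space axiom (and then each order-compatibility axiom) coordinate-wise, reducing the work to the corresponding identity in the scalar weak semifield $U=\prt{[0,1],\dotplus,\cdot}$ applied through the projections $\pi_i$ from Remark \ref{degem}. First I would establish closure of the two operations on $L_n([0,1])$: given $\mathbf{x},\mathbf{y}\in L_n([0,1])$ and $r\in[0,1]$, monotonicity of $\dotplus$ in both arguments gives $x_i\dotplus y_i\leq x_{i+1}\dotplus y_{i+1}$, so $\mathbf{x}\circplus \mathbf{y}\in L_n([0,1])$; likewise $rx_i\leq rx_{i+1}$ with $rx_i\in[0,1]$ shows $r\odot \mathbf{x}\in L_n([0,1])$.

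Next, I would verify axioms (SV1)--(SV7) one at a time. Associativity and commutativity of $\circplus$ come directly from those of $\dotplus$ in $U$ applied in each coordinate; the scalar associativity $(r\cdot s)\odot \mathbf{x}=r\odot(s\odot \mathbf{x})$ reduces to associativity of ordinary multiplication; the identity $1\odot \mathbf{x}=\mathbf{x}$ is immediate; the two distributivities (SV5) and (SV6) follow from $r\cdot(s\dotplus t)=(r\cdot s)\dotplus(r\cdot t)$ and $(r\dotplus s)\cdot t=(r\cdot t)\dotplus(s\cdot t)$ in $U$, again applied in each coordinate; and the additive identity of $\mathcal{L}_n([0,1])$ is exhibited as the degenerate element $/0/\in \mathcal{D}_n$, since $0\dotplus x_i=x_i$ in $U$ gives $/0/\circplus \mathbf{x}=\mathbf{x}$.

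For the ordered semi-vector space assertion, I would verify (SV8) and (SV9) with respect to the product order $\leq_n^p$ from equation (\ref{eqn: n-ordem}). If $\mathbf{x}\leq_n^p \mathbf{y}$, i.e.\ $x_i\leq y_i$ for every $i\in N_1^n$, then for any $r\in[0,1]$ we have $rx_i\leq ry_i$, giving $r\odot \mathbf{x}\leq_n^p r\odot \mathbf{y}$; and for any $\mathbf{z}\in L_n([0,1])$, monotonicity of $\dotplus$ yields $x_i\dotplus z_i\leq y_i\dotplus z_i$ coordinate-wise, so $\mathbf{x}\circplus \mathbf{z}\leq_n^p \mathbf{y}\circplus \mathbf{z}$.

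The main technical point is the distributivity axioms (SV5) and (SV6), since these are the only steps where one must confront the truncation built into $\dotplus$ rather than appeal directly to standard real arithmetic; once they are settled at the scalar level in $U$ (splitting into cases according to whether the relevant sums exceed $1$), the passage to $\mathcal{L}_n([0,1])$ is purely routine coordinate-wise bookkeeping that leans only on the monotonicity of $\dotplus$ in each argument and on the fact that $[0,1]$ is closed under both $\dotplus$ and $\cdot$.
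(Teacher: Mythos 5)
Your route is the same as the paper's: check closure, then verify (SV1)--(SV7) and the two compatibility axioms coordinate-wise, reducing everything to identities in the scalar structure $U$. The closure argument, (SV1)--(SV4), (SV7), and the order-compatibility checks (SV8)--(SV9) are all fine and agree with the paper. The problem sits exactly at the step you single out as ``the main technical point'' and then defer: the scalar identities $r\cdot(s\dotplus t)=(r\cdot s)\dotplus(r\cdot t)$ and $(r\dotplus s)\cdot t=(r\cdot t)\dotplus(s\cdot t)$ are \emph{false} in $U$ whenever the truncation is active, so the case analysis you promise cannot be completed. Concretely, take $r=\frac{1}{2}$ and $s=t=0.6$: then $r\cdot(s\dotplus t)=\frac{1}{2}\cdot\min(1,1.2)=0.5$, while $(r\cdot s)\dotplus(r\cdot t)=0.3\dotplus 0.3=0.6$; likewise $(0.6\dotplus 0.6)\cdot 0.5=0.5$ while $(0.6\cdot 0.5)\dotplus(0.6\cdot 0.5)=0.6$. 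Reading these coordinate-wise with the degenerate element $/0.6/$ and the scalar $\frac{1}{2}$ gives violations of (SV5) and (SV6) for $\prt{L_n([0,1]),\circplus,\odot}$. So ``the relevant sums exceed $1$'' is not a routine case to be checked --- it is precisely the case in which the identity breaks.

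For what it is worth, the paper's own proof is no more careful here: it simply writes $r(x_i\dotplus y_i)=rx_i\dotplus ry_i$ and $(r\dotplus s)x_i=rx_i\dotplus sx_i$ as if immediate, without the case split you rightly sensed was needed (and the same issue already undermines the earlier claim that $U$ satisfies the distributivity axiom of a weak semifield). So your proposal faithfully reproduces the paper's argument, including its gap; the difference is that you explicitly undertake to close that gap by cases, and that cannot be done as stated. A repairable version must ensure no truncation ever occurs --- e.g.\ restrict to scalars and summands whose relevant combinations stay in $[0,1]$, which is what actually happens in the later OWA and weighted-average constructions where the weights sum to $1$ --- or replace $\dotplus$ by an untruncated addition on a different carrier.
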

\begin{proof}
 First note that the scalar product and the bounded addition are well defined, i.e. $r\odot \mathbf{x},\mathbf{x}\circplus\mathbf{y}\in L_n([0,1])$ for each $r\in [0,1]$ and $\mathbf{x},\mathbf{y}\in L_n([0,1])$.
 Indeed,  for each $i,j\in N_1^n$ such that $i\leq j$,
 $r x_i\leq r x_j$ and $\min(1, x_i+ y_i)\leq \min (1,x_j+y_j)$.

 Since, (SV1) to (SV4) are trivially satisfied by the scalar product and the bounded addition, we will only prove (SV5) - (SV9).

 \begin{enumerate}[start=5,labelindent=\parindent, leftmargin=*,label=\normalfont{(SV\arabic*)}]
  \item $r \odot (\mathbf{x}\circplus \mathbf{y})=(r (x_1\dotplus y_1),\ldots,r (x_n\dotplus y_n))=(r x_1\dotplus r y_1,\ldots,r x_n\dotplus r y_n)= (r x_1,\ldots,r x_n) \circplus (r y_1,\ldots,r y_n) = r \odot \mathbf{x}\dotplus r\odot\mathbf{y}$;
  \item $(r\dotplus s)\odot \mathbf{x}=((r\dotplus s)x_1,\ldots,(r\dotplus s)x_n)=(r x_1\dotplus s x_1,\ldots,r x_n\dotplus s x_n)=(r x_1\ldots,r x_n)\circplus (s x_1,\ldots,s x_n)=r\odot \mathbf{x} \dotplus s\odot \mathbf{x}$;
  \item $\mathbf{x}\circplus /0/=(x_1\dotplus 0,\ldots,x_n\dotplus 0)=\mathbf{x}$;
   \item If $\mathbf{x}\leq_n^p \mathbf{y}$ then $x_i\leq y_i$ for each $i\in N_1^n$ and therefore, because $U$ is a weak semifield,  $r x_i\leq r y_i$ for all $i\in N_1^n$. So, $r\odot \mathbf{x}=(r x_1,\ldots,r x_n)\leq_n^p (r y_1,\ldots,r y_n)=r\odot\mathbf{y}$;
   \item If $\mathbf{x}\leq_n^p \mathbf{y}$ then $x_i\leq y_i$ for each $i\in N_1^n$ and therefore, because $U$ is a weak semifield, $x_i\odot z_i\leq y_i\odot z_i$ for all $i\in N_1^n$. So, $\mathbf{x}\circplus \mathbf{z} =   (x_1\dotplus z_1,\ldots,x_n\dotplus z_n)\leq_n^p  (y_1\dotplus z_1,\ldots,y_n\dotplus z_n)= \mathbf{y}\circplus \mathbf{z}$.
 \end{enumerate}
Therefore, $\prt{\mathcal{L}_n([0,1]),\leq_n^p}$ is an ordered semi-vector space over $U$.
\end{proof}

Observe that the natural preorder (Equation (\ref{eq-Nat-orde-SVS})) of the semi-vector space $\mathcal{L}_n([0,1])$ over the weak semifield $U$, i.e. $\leq_{L_n([0,1])}$,  is in fact, a partial order on $L_n([0,1])$ and consequently, by Proposition \ref{pro-nat-ord}, $\prt{\mathcal{L}_n([0,1]),\leq_{L_n([0,1])}}$ is an ordered semi-vector space over $U$. In addition, $\leq_{L_n([0,1])}$ is  refined by $\leq_n^p$ and therefore by any admissible order. Nevertheless,  not  for all admissible order $\preceq$ the pair $\prt{\mathcal{L}_n([0,1]),\preceq}$ is an ordered semi-vector space over $U$. In fact, consider the aggregation function $A=\max_{\vec{e}}$ of the Example \ref{exem: aggregations} for $\vec{e}=(1,2,3,4)$, $\mathbf{x}=(0.4,0.6,0.7,0.8)$ and $\mathbf{y}=(0.2,0.2,0.2,0.9)$. Then $\max_{\vec{e}}(\mathbf{x})=0.4096$ whereas $\max_{\vec{e}}(\mathbf{y})=0.6561$ and therefore, $\mathbf{x}\prec_\tau^A \mathbf{y}$. However, taking $r=0.5$ we have that  $\max_{\vec{e}}(r\odot\mathbf{x})=\max_{\vec{e}}(0.2,0.3,0.35,0.4)=0.2$ and $\max_{\vec{e}}(r\odot\mathbf{y})=\max_{\vec{e}}(0.1,0.1,0.1,0.45)=0.1$. So,  $r\odot\mathbf{y}\prec_\tau^A  r\odot\mathbf{x}$. Therefore, the pair  $\prt{\mathcal{L}_4([0,1]),\preceq_\tau^A}$ does not satisfy (SV8) and consequently, it is not an ordered semi-vector space over $U$.  This motivates the investigation of  families of admissible orders which are compatible with the  semi-vector space $\mathcal{L}_n([0,1])$.

\begin{prop}\label{prop-tau-scalar-prod}
 For any bijection $\tau:N_1^n\rightarrow N_1^n$, the pair $\prt{\mathcal{L}_n([0,1]),\preceq_\tau}$ is an ordered semi-vector space over $U$.
\end{prop}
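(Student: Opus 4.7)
Since Theorem~\ref{teo-SVS} already establishes that $\mathcal{L}_n([0,1])$ is a semi-vector space over $U$, and Example~\ref{ex-adm-orders} already shows that $\preceq_\tau$ is a total (and hence partial) order on $L_n([0,1])$, the proof reduces to verifying the two compatibility axioms (SV8) and (SV9). Since $\preceq_\tau$ is the reflexive closure of $\prec_\tau$, the only substantive case is $\mathbf{x} \prec_\tau \mathbf{y}$, with a witness $k \in N_1^n$ meaning $\pi_{\tau(i)}(\mathbf{x}) = \pi_{\tau(i)}(\mathbf{y})$ for all $i < k$ and $\pi_{\tau(k)}(\mathbf{x}) < \pi_{\tau(k)}(\mathbf{y})$.

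For (SV8), I would fix $r \in [0,1]$. The case $r = 0$ sends both sides to $/0/$, so compatibility holds vacuously. For $r > 0$, scalar multiplication by $r$ preserves each equality $r\,\pi_{\tau(i)}(\mathbf{x}) = r\,\pi_{\tau(i)}(\mathbf{y})$ for $i < k$ and turns the strict inequality at $\tau(k)$ into $r\,\pi_{\tau(k)}(\mathbf{x}) < r\,\pi_{\tau(k)}(\mathbf{y})$, so the same index $k$ witnesses $r \odot \mathbf{x} \prec_\tau r \odot \mathbf{y}$.

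For (SV9), I would fix $\mathbf{z} \in L_n([0,1])$ and write $u_j = \min(1, x_j + z_j)$ and $v_j = \min(1, y_j + z_j)$. The componentwise monotonicity of bounded addition immediately yields $u_{\tau(i)} = v_{\tau(i)}$ for $i < k$ and $u_{\tau(k)} \le v_{\tau(k)}$. In the generic subcase $u_{\tau(k)} < v_{\tau(k)}$, the same index $k$ witnesses $\mathbf{u} \prec_\tau \mathbf{v}$ and we are done.

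The main obstacle I anticipate is the saturation subcase, in which $\pi_{\tau(k)}(\mathbf{x}) + \pi_{\tau(k)}(\mathbf{z}) \ge 1$ forces $u_{\tau(k)} = v_{\tau(k)} = 1$; one must then look among later indices $\tau(i)$ with $i > k$ for a new witness. The delicate point is that, while the non-decreasing structure of $\mathbf{x},\mathbf{y},\mathbf{z}$ propagates saturation only in the \emph{natural} index order $1,2,\ldots,n$, the bijection $\tau$ may reshuffle indices arbitrarily, so saturation does not automatically propagate through the $\tau$-sequence. Reconciling the $\tau$-ordering with the intrinsic componentwise monotonicity of bounded addition---and in particular arguing that no index $\tau(i)$ with $i > k$ can carry a reverse-direction inequality capable of producing $\mathbf{v} \prec_\tau \mathbf{u}$---is where the proof is most delicate, and where a careful case analysis comparing the position of $\tau(k)$ in the natural ordering against its image under $\tau$ will be needed.
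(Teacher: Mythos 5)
Your treatment of (SV8) is correct and coincides with the paper's argument. The issue is (SV9): you correctly isolate the saturation subcase (where $\pi_{\tau(k)}(\mathbf{x})+\pi_{\tau(k)}(\mathbf{z})\geq 1$ destroys the witness index $k$) as the crux, but you leave it unresolved, so as a proof the proposal is incomplete. More importantly, the obstacle you flagged is not a technicality that ``a careful case analysis'' can remove: for $\tau\neq Id$ (and $n\geq 2$) compatibility with $\circplus$ is simply false. Take $n=2$, $\tau(1)=2$, $\tau(2)=1$ (the anti-lexicographical order), $\mathbf{x}=(0.4,0.5)$, $\mathbf{y}=(0.1,0.6)$ and $\mathbf{z}=(0.5,0.5)$. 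Then $\pi_2(\mathbf{x})=0.5<0.6=\pi_2(\mathbf{y})$ gives $\mathbf{x}\prec_\tau\mathbf{y}$, while $\mathbf{x}\circplus\mathbf{z}=(0.9,1)$ and $\mathbf{y}\circplus\mathbf{z}=(0.6,1)$ tie in the second component and satisfy $0.6<0.9$ in the first, so $\mathbf{y}\circplus\mathbf{z}\prec_\tau\mathbf{x}\circplus\mathbf{z}$, violating (SV9).

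The paper's own proof slides over exactly this point: after the tie at $\tau(k)$ it asserts that $\pi_{\tau(l)}(\mathbf{x}\circplus\mathbf{z})=1=\pi_{\tau(l)}(\mathbf{y}\circplus\mathbf{z})$ for every $l\in N_k^n$, hence $\mathbf{x}\circplus\mathbf{z}=\mathbf{y}\circplus\mathbf{z}$. That inference uses the monotonicity of the components of elements of $L_n([0,1])$ in the \emph{natural} index order and is valid only when $\tau(l)\geq\tau(k)$ for all $l\geq k$ --- in effect only for $\tau=Id$, where $x_l+z_l\geq x_k+z_k\geq 1$ and $y_l+z_l\geq y_k+z_k>1$ for $l\geq k$ do force both sums to saturate. (The paper itself later shows, inside the proof of Proposition \ref{prop-odot-not-circplus2}, that the $\prec_\tau$ tie-breaking is compatible with $\circplus$ if and only if $\tau=Id$, which contradicts the present statement for non-identity $\tau$.) So your instinct was right: the missing step in your proof is a genuine gap, and it cannot be closed as stated; the compatibility with $\circplus$ requires $\tau=Id$, while for general $\tau$ only compatibility with the scalar product $\odot$ survives.
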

\begin{proof} From Theorem \ref{teo-SVS}, we know that $\mathcal{L}_n([0,1])=\prt{L_n([0,1]),\circplus,\odot}$ is a semi-vector space over the weak semifield  $U$. So it is necessary to prove (SV8) and (SV9).

\begin{enumerate}[start=8,labelindent=\parindent, leftmargin=*,label=\normalfont{(SV\arabic*)}]
 \item Let $\mathbf{x},\mathbf{y}\in L_n([0,1])$ and $r\in [0,1]$ be such that   $\mathbf{x}\preceq_\tau \mathbf{y}$.
 If $r=0$ or $\mathbf{x}= \mathbf{y}$ then, by Theorem \ref{teo-SVS},  $r\odot \mathbf{x}\leq_n^p r\odot \mathbf{y}$ and since $\preceq_\tau$ is admissible, then  $r\odot \mathbf{x}\preceq_\tau r\odot \mathbf{y}$.
 Now, consider the case $r>0$ and $\mathbf{x}\prec_\tau  \mathbf{y}$. Then $\exists k\in N_1^n$ such that $\pi_{\tau(k)}(\mathbf{x})<\pi_{\tau(k)}(\mathbf{y})$ and $\forall  i\in N_1^{k-1}$, $\pi_{\tau(i)}(\mathbf{x})=\pi_{\tau(i)}(\mathbf{y})$. Therefore,
 $\pi_{\tau(k)}(r\odot \mathbf{x}) =r \cdot \pi_{\tau(k)}(\mathbf{x})< r  \cdot \pi_{\tau(k)}(\mathbf{y})=\pi_{\tau(k)}(r\odot \mathbf{y})$ and  $\forall i\in N_1^{k-1}$,  $\pi_{\tau(i)}(r\odot \mathbf{x})=r\cdot \pi_{\tau(i)}(\mathbf{x}) = r\cdot \pi_{\tau(i)}(\mathbf{y})=\pi_{\tau(i)}(r\odot \mathbf{y})$.
  Hence, in both cases, $r\odot\mathbf{x}\preceq_\tau r\odot\mathbf{y}$.

  \item Let $\mathbf{x},\mathbf{y},\mathbf{z}\in L_n([0,1])$ be such that   $\mathbf{x}\preceq_\tau \mathbf{y}$. If  $\mathbf{x}= \mathbf{y}$ then trivially  $\mathbf{x}\circplus \mathbf{z}\preceq_\tau\mathbf{y}\circplus \mathbf{z}$.  So, we just need to consider the case $\mathbf{x}\prec_\tau  \mathbf{y}$. In this case, by definition, $\exists k\in N_1^n$ such that $\pi_{\tau(k)}(\mathbf{x})<\pi_{\tau(k)}(\mathbf{y})$ and $\forall  i\in N_1^{k-1}$, $\pi_{\tau(i)}(\mathbf{x})=\pi_{\tau(i)}(\mathbf{y})$. Therefore,
  $\forall  i\in N_1^{k-1}$, $\pi_{\tau(i)}(\mathbf{x}\circplus \mathbf{z})=\pi_{\tau(i)}(\mathbf{x})\dotplus \pi_{\tau(i)}(\mathbf{z})=\pi_{\tau(i)}(\mathbf{y})\dotplus \pi_{\tau(i)}(\mathbf{z})=\pi_{\tau(i)}(\mathbf{y}\circplus \mathbf{z})$ and $\pi_{\tau(k)}(\mathbf{x}\circplus \mathbf{z})=\pi_{\tau(k)}(\mathbf{x})\dotplus \pi_{\tau(k)}(\mathbf{z})\leq \pi_{\tau(k)}(\mathbf{y})\dotplus \pi_{\tau(k)}(\mathbf{z})=\pi_{\tau(k)}(\mathbf{y}\circplus \mathbf{z})$. If $\pi_{\tau(k)}(\mathbf{x}\circplus \mathbf{z})< \pi_{\tau(k)}(\mathbf{y}\circplus \mathbf{z})$ then $\mathbf{x}\circplus \mathbf{z}\prec_\tau\mathbf{y}\circplus \mathbf{z}$. On the other hand, if  $\pi_{\tau(k)}(\mathbf{x}\circplus \mathbf{z})= \pi_{\tau(k)}(\mathbf{y}\circplus \mathbf{z})$ then $\pi_{\tau(l)}(\mathbf{x}\circplus \mathbf{z})=1= \pi_{\tau(l)}(\mathbf{y}\circplus \mathbf{z})$ for each $l\in N_k^n$ and consequently, $\mathbf{x}\circplus \mathbf{z}= \mathbf{y}\circplus \mathbf{z}$.

\end{enumerate}

\end{proof}

\begin{prop} \label{prop-odot-not-circplus}
 Let $n\geq 2$, $\tau:N_1^n\rightarrow N_1^n$ be a bijection and  $\omega=(w_1,\ldots,w_n)$ be a weighting vector. Then $\preceq_\tau^\omega$ is compatible with the scalar product $\odot$, but it is not compatible with the vector addition $\circplus$.
 \end{prop}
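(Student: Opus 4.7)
The plan is to handle the two assertions separately. For compatibility with the scalar product, the driving observation is that $F_\omega$ is positively homogeneous in its arguments: since $r\in[0,1]$ gives $rx_i-ry_i=r(x_i-y_i)$ and $r\geq 0$,
\[
F_\omega(r\odot \mathbf{x},r\odot \mathbf{y})\;=\;\sum_{i=1}^{n} w_i\max(0,rx_i-ry_i)\;=\;r\,F_\omega(\mathbf{x},\mathbf{y}).
\]
With this, assuming $\mathbf{x}\preceq_\tau^\omega \mathbf{y}$, I would split into cases. If $r=0$ or $\mathbf{x}=\mathbf{y}$, then $r\odot\mathbf{x}=r\odot\mathbf{y}$ and compatibility is immediate. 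If $r>0$ and $F_\omega(\mathbf{x},\mathbf{y})<F_\omega(\mathbf{y},\mathbf{x})$, multiplying both sides by $r>0$ and applying the homogeneity identity yields $F_\omega(r\odot\mathbf{x},r\odot\mathbf{y})<F_\omega(r\odot\mathbf{y},r\odot\mathbf{x})$, so $r\odot\mathbf{x}\prec_\tau^\omega r\odot\mathbf{y}$. Finally, if $F_\omega(\mathbf{x},\mathbf{y})=F_\omega(\mathbf{y},\mathbf{x})$ and $\mathbf{x}\preceq_\tau \mathbf{y}$, the same homogeneity yields equality after scaling, while Proposition~\ref{prop-tau-scalar-prod} supplies $r\odot\mathbf{x}\preceq_\tau r\odot\mathbf{y}$, and the tie-breaker clause gives $r\odot\mathbf{x}\preceq_\tau^\omega r\odot\mathbf{y}$.

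For the failure of compatibility with $\circplus$, I will exhibit a concrete counterexample. The obstruction must come from the nonlinearity of the bounded addition: adding $\mathbf{z}$ can saturate some coordinates to $1$, annihilating a positive contribution to $F_\omega$ on one side while leaving the other side intact, so that the $F_\omega$ comparison collapses to a tie decided by the lexicographic fallback in the opposite direction. Take $n=2$, $\tau=\mathrm{id}$ (so $\preceq_\tau$ is the lexicographical order), $\omega=(1/2,1/2)$, and set
\[
\mathbf{x}=(0.3,0.95),\qquad \mathbf{y}=(0.4,0.7),\qquad \mathbf{z}=(0,0.2).
\]
A direct computation gives $F_\omega(\mathbf{x},\mathbf{y})=0.125$ and $F_\omega(\mathbf{y},\mathbf{x})=0.05$, hence $\mathbf{y}\prec_\tau^\omega\mathbf{x}$. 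On the other hand,
\[
\mathbf{x}\circplus\mathbf{z}=(0.3,1),\qquad \mathbf{y}\circplus\mathbf{z}=(0.4,0.9),
\]
and both $F_\omega(\mathbf{x}\circplus\mathbf{z},\mathbf{y}\circplus\mathbf{z})$ and $F_\omega(\mathbf{y}\circplus\mathbf{z},\mathbf{x}\circplus\mathbf{z})$ equal $0.05$. The tie is broken lexicographically: $0.3<0.4$, so $\mathbf{x}\circplus\mathbf{z}\prec_\tau^\omega \mathbf{y}\circplus\mathbf{z}$. If (SV9) held we would obtain $\mathbf{y}\circplus\mathbf{z}\preceq_\tau^\omega \mathbf{x}\circplus\mathbf{z}$, contradicting totality of $\preceq_\tau^\omega$. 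Hence $\preceq_\tau^\omega$ is not compatible with $\circplus$.

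The main obstacle here is pinning down the counterexample rather than the positive direction: one needs coordinates chosen so that (i) the initial comparison is strict in $F_\omega$, and (ii) after translation the saturation at $1$ kills exactly the coordinate that was driving that strict inequality, pushing the comparison into the tie-breaker regime with the opposite outcome. Once those numerical conditions are arranged, the verification is routine arithmetic.
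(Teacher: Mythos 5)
Your treatment of compatibility with the scalar product is essentially the paper's own argument: the homogeneity identity $F_\omega(r\odot\mathbf{x},r\odot\mathbf{y})=r\,F_\omega(\mathbf{x},\mathbf{y})$, the same three-way case split, and the appeal to Proposition~\ref{prop-tau-scalar-prod} in the tie case. That half is correct.

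The counterexample for $\circplus$ has a genuine gap: it proves less than the proposition asserts. The statement is quantified over every $n\geq 2$ and every bijection $\tau$, so the failure of (SV9) must be exhibited for each such $\tau$ (and each $n$), whereas your triple is built to land in the tie-breaking regime after the addition, so its conclusion depends entirely on which $\preceq_\tau$ resolves the tie. You fix $\tau=\mathrm{id}$; for the only other bijection of $N_1^2$ (the anti-lexicographical order) the same data give $\mathbf{y}\circplus\mathbf{z}=(0.4,0.9)\prec_\tau(0.3,1)=\mathbf{x}\circplus\mathbf{z}$ because $0.9<1$, which is exactly what (SV9) demands of the pair $\mathbf{y}\prec_\tau^\omega\mathbf{x}$, so no contradiction arises. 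You also do not say how to pass from $n=2$ to general $n$. The paper avoids both issues by choosing $\mathbf{x},\mathbf{y},\mathbf{z}$ padded with $1$'s to dimension $n$ and a weighting vector for which the $F_\omega$-comparison is strict both before and after the addition, with the inequality reversed; the tie-breaker is never consulted, so the example is independent of $\tau$ and works for every $n\geq 2$. Your example is repairable in the same spirit: keeping your $\mathbf{x},\mathbf{y},\mathbf{z}$ but taking $w_2<w_1<2.5\,w_2$ (e.g.\ $\omega=(0.6,0.4)$) yields $F_\omega(\mathbf{y},\mathbf{x})=0.06<0.1=F_\omega(\mathbf{x},\mathbf{y})$ before the addition and $F_\omega(\mathbf{x}\circplus\mathbf{z},\mathbf{y}\circplus\mathbf{z})=0.04<0.06=F_\omega(\mathbf{y}\circplus\mathbf{z},\mathbf{x}\circplus\mathbf{z})$ after it, a strict reversal that needs no tie-breaking. (Both your example and the paper's fix a particular $\omega$; if the proposition is read as universally quantified over $\omega$ as well, neither proof fully discharges that quantifier, but the $\tau$-dependence is a defect specific to your version.)
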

\begin{proof} For each $\mathbf{x},\mathbf{y}\in L_n([0,1])$ and $r\in [0,1]$, we have that

\begin{eqnarray} \label{eq-aux-rF} \nonumber
 r\cdot F_\omega(\mathbf{x},\mathbf{y}) &=& r\cdot \sum\limits_{i=1}^n w_i \max(0,\pi_i(\mathbf{x})-\pi_i(\mathbf{y})) \\ \nonumber
 &=&  \sum\limits_{i=1}^n w_i \max(0, r\cdot(\pi_i(\mathbf{x})-\pi_i(\mathbf{y}))) \\ 
&=& \sum\limits_{i=1}^n w_i \max(0,\pi_i(r \odot \mathbf{x})-\pi_i(r \odot \mathbf{y})) \\ \nonumber
&=& F_\omega(r\odot \mathbf{x},r\odot \mathbf{y})
\end{eqnarray}

Let $\mathbf{x},\mathbf{y}\in L_n([0,1])$ and $r\in [0,1]$ be such that   $\mathbf{x}\preceq_\tau^\omega \mathbf{y}$.
 If $r=0$ or $\mathbf{x}= \mathbf{y}$ then $r\odot \mathbf{x}\leq_n^p r\odot \mathbf{y}$ and since $\preceq_\tau^\omega$ is admissible, it follows that  $r\odot \mathbf{x}\preceq_\tau^\omega s\odot \mathbf{y}$.

  Consider the case $r>0$ and $\mathbf{x}\prec_\tau^\omega  \mathbf{y}$. Then, either $F_\omega(\mathbf{x},\mathbf{y}) < F_\omega(\mathbf{y},\mathbf{x})$, or  $F_\omega(\mathbf{x},\mathbf{y}) = F_\omega(\mathbf{y},\mathbf{x})$ and $\mathbf{x}\prec_\tau  \mathbf{y}$.

  If $F_\omega(\mathbf{x},\mathbf{y}) < F_\omega(\mathbf{y},\mathbf{x})$
  then
  $r\cdot  F_\omega(\mathbf{x},\mathbf{y}) < r\cdot F_\omega(\mathbf{y},\mathbf{x})$ and therefore, by Equation (\ref{eq-aux-rF}), $F_\omega(r\odot\mathbf{x},r\odot\mathbf{y}) < F_\omega(r\odot\mathbf{y},r\odot\mathbf{x})$ and consequently,  $r\odot\mathbf{x}\prec_\tau^\omega  r\odot\mathbf{y}$.

   On the other hand, if $F_\omega(\mathbf{x},\mathbf{y}) = F_\omega(\mathbf{y},\mathbf{x})$ then $r\cdot  F_\omega(\mathbf{x},\mathbf{y}) = r\cdot F_\omega(\mathbf{y},\mathbf{x})$ and therefore, by Equation (\ref{eq-aux-rF}),  
   
   \begin{equation}\label{eq-prova-*} F_\omega(r\odot\mathbf{x},r\odot\mathbf{y}) = F_\omega(r\odot\mathbf{y},r\odot\mathbf{x}).\end{equation} 
   
   In addition, if $\mathbf{x}\prec_\tau  \mathbf{y}$ then, by Proposition \ref{prop-tau-scalar-prod},  
   
   \begin{equation}\label{eq-prova-**} r\odot \mathbf{x} \preceq_\tau r\odot \mathbf{y}.
   \end{equation}

   Hence, from (\ref{eq-prova-*}) and (\ref{eq-prova-**}), we have that $r\odot\mathbf{x}\preceq_\tau^\omega  r\odot\mathbf{y}$ and consequently, the pair $\prt{\mathcal{L}_n([0,1]),\preceq_\tau^\omega}$ satisfies (SV8).

 On the other hand, let $\mathbf{x}=(0.5,0.6,\underbrace{1,\ldots, 1}_{(n-2)-times}),\mathbf{y}=(0.3,0.9,\underbrace{1,\ldots, 1}_{(n-2)-times}),\mathbf{z}=(0.1,0.3,\underbrace{1,\ldots, 1}_{(n-2)-times})\in L_n([0,1])$ and $\omega=(0.4,0.6,\underbrace{0,\ldots, 0}_{(n-2)-times})$. Then $F_\omega(\mathbf{x},\mathbf{y})=0.08 < 0.18 =F_\omega(\mathbf{y},\mathbf{x})$ and so  $\mathbf{x}\prec_\tau^\omega \mathbf{y}$. However, $\mathbf{x}\circplus \mathbf{z}=(0.6,0.9,\underbrace{1,\ldots, 1}_{(n-2)-times})$ and $\mathbf{y}\circplus \mathbf{z}=(0.4,\underbrace{1,\ldots, 1}_{(n-1)-times})$. Therefore,  $F_\omega(\mathbf{y}\circplus \mathbf{z},\mathbf{x}\circplus \mathbf{z})=0.06 < 0.08 =F_\omega(\mathbf{x}\circplus \mathbf{z},\mathbf{y}\circplus \mathbf{z})$, and consequently, $\mathbf{y}\circplus \mathbf{z}\prec_\tau^\omega \mathbf{x}\circplus \mathbf{z}$. Hence,   $\prt{\mathcal{L}_n([0,1]),\preceq_\tau^\omega}$ does not satisfy (SV9).
\end{proof}

 Therefore, as a direct consequence of Proposition \ref{prop-odot-not-circplus}, $\prt{\mathcal{L}_n([0,1]),\preceq_\tau^\omega}$ is not an ordered semi-vector space over $U$.

\begin{prop} \label{prop-odot-not-circplus2}
 Let $n\geq 2$, $\tau:N_1^n\rightarrow N_1^n$ be a bijection and  $A:[0,1]^n\rightarrow [0,1]$ be an  aggregation function.  If $A$ is homogeneous of order $k$  then $\preceq_\tau^A$ is  compatible with the scalar product $\odot$. In addition,  $\preceq_\tau^A$ is compatible with the vector addition $\circplus$ if and only if  $\tau$ is  the identity.
 \end{prop}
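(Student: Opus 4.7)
The plan is to handle the two claims of the proposition separately.

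For compatibility with the scalar product $\odot$, fix $\mathbf{x}\preceq_\tau^A\mathbf{y}$ and $r\in[0,1]$. If $r=0$ then $r\odot\mathbf{x}=/0/=r\odot\mathbf{y}$, so the conclusion is trivial. For $r>0$, split along the definition of $\preceq_\tau^A$: when $A(\mathbf{x})<A(\mathbf{y})$, apply homogeneity of order $k$ to obtain $A(r\odot\mathbf{x})=r^{k}A(\mathbf{x})<r^{k}A(\mathbf{y})=A(r\odot\mathbf{y})$, hence $r\odot\mathbf{x}\prec_\tau^A r\odot\mathbf{y}$; when $A(\mathbf{x})=A(\mathbf{y})$ with $\mathbf{x}\preceq_\tau\mathbf{y}$, homogeneity yields $A(r\odot\mathbf{x})=A(r\odot\mathbf{y})$ and Proposition~\ref{prop-tau-scalar-prod} supplies $r\odot\mathbf{x}\preceq_\tau r\odot\mathbf{y}$, so $r\odot\mathbf{x}\preceq_\tau^A r\odot\mathbf{y}$.

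For the biconditional concerning $\circplus$, the necessity is handled by a counterexample whenever $\tau$ is not the identity. Let $k$ be the least index with $\tau(k)\neq k$, so that $\tau(k)>k$ by minimality. I would then construct $\mathbf{x},\mathbf{y}\in L_n([0,1])$ agreeing at the coordinates $\tau(1),\ldots,\tau(k-1)$, satisfying $A(\mathbf{x})=A(\mathbf{y})$, and distinguished first at coordinate $\tau(k)$ so that $\mathbf{x}\prec_\tau\mathbf{y}$. A third vector $\mathbf{z}$ is then chosen so that bounded addition saturates a coordinate of lower $\tau$-priority differently in $\mathbf{x}\circplus\mathbf{z}$ and $\mathbf{y}\circplus\mathbf{z}$, reversing the outcome of the lexicographic-style tie-break via $\tau$. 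The template for this construction is the counterexample exhibited after Proposition~\ref{prop-odot-not-circplus}.

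The sufficiency, assuming $\tau$ is the identity so that $\preceq_\tau$ is the lexicographic order, is the main obstacle. Starting from $\mathbf{x}\preceq_\tau^A\mathbf{y}$ one must show $\mathbf{x}\circplus\mathbf{z}\preceq_\tau^A\mathbf{y}\circplus\mathbf{z}$; the delicate case is $A(\mathbf{x})<A(\mathbf{y})$ with $\mathbf{x}$ and $\mathbf{y}$ incomparable in $\leq_n^p$, because the bounded addition can saturate coordinates of $\mathbf{x}\circplus\mathbf{z}$ and $\mathbf{y}\circplus\mathbf{z}$ to different extents and a priori invert the $A$-order. I expect the proof to lean on the lexicographic priority of the identity permutation together with homogeneity of order $k$ to control the saturation coordinate by coordinate, reducing the argument, at the critical coordinate, to the ``equal $A$-value'' case treated by Proposition~\ref{prop-tau-scalar-prod}; this is the step where the identity hypothesis on $\tau$ is actually consumed and where extra care will be required.
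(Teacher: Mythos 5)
Your treatment of compatibility with $\odot$ is correct and coincides with the paper's argument: split on $r=0$, then on whether the order of $\mathbf{x},\mathbf{y}$ is decided by $A$ or by the tie-break $\preceq_\tau$, using homogeneity of order $k$ in the first sub-case and Proposition \ref{prop-tau-scalar-prod} in the second. Your sketch of the necessity of $\tau=Id$ is also essentially the paper's construction (the paper takes $m$ with $\tau(m)>\tau(m+1)$ and $\mathbf{z}=/1-x_{\tau(m)}/$ to saturate the higher-priority coordinate), though both you and the paper still owe the verification that the $A$-values can be arranged so that the relevant comparisons are really decided by the lexicographic tie-break before and after adding $\mathbf{z}$.

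The genuine gap is the sufficiency direction, which you leave as a plan rather than a proof. The case you single out as delicate --- $A(\mathbf{x})<A(\mathbf{y})$ with $\mathbf{x},\mathbf{y}$ incomparable in $\leq_n^p$ --- is not merely delicate: it admits counterexamples, so no amount of coordinate-by-coordinate care will close it. Take $n=2$, $\tau=Id$, $A$ the arithmetic mean (homogeneous of order $1$), $\mathbf{x}=(0.4,0.5)$, $\mathbf{y}=(0.1,0.9)$ and $\mathbf{z}=/0.5/$. Then $A(\mathbf{x})=0.45<0.5=A(\mathbf{y})$, so $\mathbf{x}\prec_{Id}^A\mathbf{y}$; but $\mathbf{x}\circplus\mathbf{z}=(0.9,1)$ and $\mathbf{y}\circplus\mathbf{z}=(0.6,1)$, so $A(\mathbf{y}\circplus\mathbf{z})=0.8<0.95=A(\mathbf{x}\circplus\mathbf{z})$ and hence $\mathbf{y}\circplus\mathbf{z}\prec_{Id}^A\mathbf{x}\circplus\mathbf{z}$, violating (SV9). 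The paper's own proof of this direction asserts that $\mathbf{x}\preceq_\tau^A\mathbf{y}$ yields an index $m$ with $x_i=y_i$ for $i<m$ and $x_m<y_m$; this only follows when $A(\mathbf{x})=A(\mathbf{y})$ and the order is settled by the lexicographic tie-break, which is exactly the easy case, and the conclusion $\mathbf{x}\circplus\mathbf{z}\prec_\tau\mathbf{y}\circplus\mathbf{z}$ is then converted into $\prec_\tau^A$ without controlling the $A$-values of the sums. So the step you could not complete is in fact false as stated, and would require an additional hypothesis on $A$ (for instance, that $A(\mathbf{x})<A(\mathbf{y})$ forces $A(\mathbf{x}\circplus\mathbf{z})\leq A(\mathbf{y}\circplus\mathbf{z})$) rather than a cleverer argument.
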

\begin{proof} Let $\mathbf{x},\mathbf{y}\in L_n([0,1])$ and $r\in [0,1]$ such that   $\mathbf{x}\preceq_\tau^A \mathbf{y}$.
 If $r=0$ or $\mathbf{x}= \mathbf{y}$ then $r\odot \mathbf{x}\leq_n^p r\odot \mathbf{y}$ and since $\preceq_\tau^A$ is an admissible order, then  $r\odot \mathbf{x}\preceq_\tau^A r\odot \mathbf{y}$.

  Consider the case $r>0$ and $\mathbf{x}\prec_\tau^A  \mathbf{y}$. Then, either $A(\mathbf{x}) < A(\mathbf{y})$ or,  $A(\mathbf{x}) = A(\mathbf{y})$ and $\mathbf{x}\prec_\tau  \mathbf{y}$.

If $A(\mathbf{x}) < A(\mathbf{y})$  then $A(r\odot \mathbf{x}) = r^k \cdot A(\mathbf{x}) < r^k\cdot A(\mathbf{y})=A(r\odot \mathbf{y})$ and therefore $r\odot \mathbf{x}\preceq_\tau^A r\odot \mathbf{y}$.

If $A(\mathbf{x}) = A(\mathbf{y})$ and $\mathbf{x}\prec_\tau  \mathbf{y}$ then $A(r\odot \mathbf{x}) = r^k \cdot A(\mathbf{x}) \leq  r^k\cdot A(\mathbf{y})=A(r\odot \mathbf{y})$. If $A(r\odot \mathbf{x})< A(r\odot \mathbf{y})$ then $r\odot \mathbf{x}\prec_\tau^A r\odot \mathbf{y}$. If $A(r\odot \mathbf{x})= A(r\odot \mathbf{y})$ then,  by Proposition \ref{prop-tau-scalar-prod},  $r \odot \mathbf{x}\prec_\tau  r \odot \mathbf{y}$. So, since $\prec_\tau^A $ is an admissible order, then $r\odot\mathbf{x}\prec_\tau^A  r\odot\mathbf{y}$. Hence, $\preceq_\tau^A$ is  compatible with the scalar product $\odot$.

 In addition, if $\tau\neq Id$ then there is $m\in N_1^{n-1}$ such that $\tau(m)>\tau(m+1)$. Let $\mathbf{x}=(x_1,\ldots,x_n)$ and  $\mathbf{y}=(y_1,\ldots,y_n)$ be such that $y_{\tau(m)}> x_{\tau(m)}>x_{\tau(m+1)}> y_{\tau(m+1)}$ and $x_{\tau(i)}=y_{\tau(i)}$ for all $i< m$. Then, clearly, $\mathbf{x}\prec_\tau \mathbf{y}$ and $x_{\tau(i)}\dotplus (1-x_{\tau(m)})=y_{\tau(i)}\dotplus (1-x_{\tau(m)})$ for each $i\leq m$ and $x_{\tau(m+1)}\dotplus (1-x_{\tau(m)})> y_{\tau(m+1)}\dotplus(1-x_{\tau(m)})$. Therefore, $\mathbf{y}\circplus \mathbf{z}\prec_\tau \mathbf{x}\circplus \mathbf{z}$ for $\mathbf{z}=/1-x_{\tau(m)}/$. Hence, $\preceq_\tau^A$ is  not compatible with the vector addition  $\circplus$. 
 However, if $\tau=Id$ and $\mathbf{x},\mathbf{y},\mathbf{z}\in L_n([0,1])$ are such that $\mathbf{x}\preceq_\tau^A \mathbf{y}$ then there exists $m\in N_1^n$ such that $x_m < y_m$ and $x_i=y_i$ for each $i< m$. Therefore, $x_i\dotplus z_i=y_i\dotplus z_i$ for each $i< m$ and if $z_m< 1-x_m$, $x_k\dotplus z_k < y_k\dotplus z_k$ and therefore $\mathbf{x}\circplus \mathbf{z} \prec_\tau^A \mathbf{y}\circplus \mathbf{z}$. If 
  $z_m\geq  1-x_m$ then $\mathbf{x}\circplus \mathbf{z} = \mathbf{y}\circplus \mathbf{z}$. So, $\preceq_{Id}^A$ is  compatible with the vector addition  $\circplus$.
  
\end{proof}


Therefore, as a direct consequence of Proposition \ref{prop-odot-not-circplus2}, when $A$ is homogeneous of some order, then $\prt{\mathcal{L}_n([0,1]),\preceq_\tau^A}$ is  an ordered semi-vector space over $U$ if and only if $\tau=Id$.

\section{$n$-Dimensional Aggregation Functions with respect to an Admissible Order}

Bedregal et al. in \cite{Benja2}  extended the notion of aggregation function for $L_n([0,1])$ taking into account the order $\leq_n^p$. Here we consider an arbitrary admissible order on $L_n([0,1])$.

\begin{defi}
 Let $\preceq$ be an admissible order on $L_n([0,1])$. A function $\mathcal{A}:L_n([0,1])^m \rightarrow L_n([0,1])$ is an $m$-ary $n$-dimensional aggregation function with respect to $\preceq$ if:
 \begin{enumerate}[labelindent=\parindent,leftmargin=*,label=\normalfont{($\mathcal{A}$\arabic*)}]
  \item $\mathcal{A}(/0/,\ldots,/0/)=/0/$ and $\mathcal{A}(/1/,\ldots,/1/)=/1/$; and
  \item  $\mathcal{A}(\mathbf{x}_1,\ldots,\mathbf{x}_m)\preceq \mathcal{A}(\mathbf{y}_1,\ldots,\mathbf{y}_m)$ whenever $\mathbf{x}_j\preceq \mathbf{y}_j$ for each $j\in N_1^m$.
  \end{enumerate}
\end{defi}

\begin{exem}
  Let $\preceq$ be an admissible order on $L_n([0,1])$.  Then $\bigcurlywedge\limits_{j=1}^m \mathbf{x}_j$ and $\bigcurlyvee\limits_{j=1}^m \mathbf{x}_j$ are  $m$-ary $n$-dimensional internal aggregation functions with respect to $\preceq$.
\end{exem}

\begin{defi}
 Let $\preceq$ be an admissible order on $L_n([0,1])$ and let $\mathcal{A}$ be an $m$-ary $n$-dimensional aggregation function with respect to $\preceq$. Then $\mathcal{A}$
 \begin{enumerate}
  \item is conjunctive if $\mathcal{A}(\mathbf{x}_1,\ldots,\mathbf{x}_m)\preceq  \bigcurlywedge\limits_{j=1}^m \mathbf{x}_j$ for each $\mathbf{x}_j\in L_n([0,1])$;
  \item is disjunctive if $\bigcurlyvee\limits_{j=1}^m \mathbf{x}_j\preceq \mathcal{A}(\mathbf{x}_1,\ldots,\mathbf{x}_m)$ for each $\mathbf{x}_j\in L_n([0,1])$;
  \item is an average  if $\bigcurlywedge\limits_{j=1}^m \mathbf{x}_j\preceq \mathcal{A}(\mathbf{x}_1,\ldots,\mathbf{x}_m)\preceq  \bigcurlyvee\limits_{j=1}^m \mathbf{x}_j$ for each $\mathbf{x}_j\in L_n([0,1])$;
  \item is  mixed  if it is neither conjunctive, nor disjunctive nor average;
  \item is idempotent if $\mathcal{A}(\mathbf{x},\ldots,\mathbf{x})=\mathbf{x}$ for each $\mathbf{x}\in L_n([0,1])$;
  \item is strict if for each $\mathbf{x}_1,\ldots,\mathbf{x}_m,\mathbf{y}\in L_n([0,1])$ and $j\in N_1^m$ such that $\mathbf{x}
_j\prec \mathbf{y}$ we have that  $\mathcal{A}(\mathbf{x}_1,\ldots,\mathbf{x}_m)\prec \mathcal{A}(\mathbf{x}_1,\ldots,\mathbf{x}_{j-1},\mathbf{y},\mathbf{x}_{j+1},\ldots,\mathbf{x}_m)$;
\item is internal if for each $\mathbf{x}_1,\ldots,\mathbf{x}_m\in L_n([0,1])$ there exists $j\in N_1^m$ such that $\mathcal{A}(\mathbf{x}_1,\ldots,\mathbf{x}_m)=\mathbf{x}_j$;
\item is symmetric  if for each $\mathbf{x}_1,\ldots,\mathbf{x}_m\in L_n([0,1])$  and for each bijection $\tau: N_1^m\rightarrow N_1^m$ we have that $\mathcal{A}(\mathbf{x}_1,\ldots,\mathbf{x}_m)=\mathcal{A}(\mathbf{x}_{\tau(1)},\ldots,\mathbf{x}_{\tau(m)})$.
 \end{enumerate}
 \end{defi}

 \begin{prop}\label{prop-idempotent-average}
   Let $\preceq$ be an admissible order on $L_n([0,1])$ and  an $m$-ary $n$-dimensional aggregation function $\mathcal{A}$ with respect to $\preceq$. Then $\mathcal{A}$ is an average if and only if $\mathcal{A}$ is idempotent, i.e.  $\mathcal{A}(\mathbf{x},\ldots,\mathbf{x})=\mathbf{x}$ for each $\mathbf{x}\in L_n([0,1])$.
 \end{prop}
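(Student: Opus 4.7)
The equivalence is the standard lattice-theoretic one: averaging collapses to the common value on a constant tuple (giving idempotence), and conversely idempotence together with monotonicity forces any aggregation to lie between the pointwise (with respect to $\preceq$) minimum and maximum of its inputs. The only thing we need to check is that the standard argument goes through with $\bigcurlywedge$ and $\bigcurlyvee$, i.e.\ with the minimum and maximum taken with respect to an arbitrary admissible order $\preceq$ rather than with respect to the lattice order $\leq_n^p$; this works because $\preceq$ is total, so $\bigcurlywedge\{\mathbf{x}_1,\ldots,\mathbf{x}_m\}$ and $\bigcurlyvee\{\mathbf{x}_1,\ldots,\mathbf{x}_m\}$ always exist and satisfy $\bigcurlywedge\{\mathbf{x}_1,\ldots,\mathbf{x}_m\}\preceq\mathbf{x}_j\preceq\bigcurlyvee\{\mathbf{x}_1,\ldots,\mathbf{x}_m\}$ for every $j\in N_1^m$.

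For the ``only if'' direction I assume $\mathcal{A}$ is an average, fix $\mathbf{x}\in L_n([0,1])$, and specialise the averaging inequality to $\mathbf{x}_1=\cdots=\mathbf{x}_m=\mathbf{x}$. In that case both $\bigcurlywedge_{j=1}^m\mathbf{x}$ and $\bigcurlyvee_{j=1}^m\mathbf{x}$ coincide with $\mathbf{x}$, so the sandwich inequality forces $\mathcal{A}(\mathbf{x},\ldots,\mathbf{x})=\mathbf{x}$, i.e.\ idempotence.

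For the ``if'' direction I assume $\mathcal{A}$ is idempotent. Given $\mathbf{x}_1,\ldots,\mathbf{x}_m\in L_n([0,1])$, set $\mathbf{a}=\bigcurlywedge_{j=1}^m\mathbf{x}_j$ and $\mathbf{b}=\bigcurlyvee_{j=1}^m\mathbf{x}_j$. Then $\mathbf{a}\preceq\mathbf{x}_j\preceq\mathbf{b}$ for every $j$, so applying axiom $(\mathcal{A}2)$ (monotonicity with respect to $\preceq$) coordinate by coordinate in the vector argument yields
\[
\mathcal{A}(\mathbf{a},\ldots,\mathbf{a})\;\preceq\;\mathcal{A}(\mathbf{x}_1,\ldots,\mathbf{x}_m)\;\preceq\;\mathcal{A}(\mathbf{b},\ldots,\mathbf{b}).
\]
Idempotence collapses the outer terms to $\mathbf{a}$ and $\mathbf{b}$ respectively, giving the required average inequality.

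There is essentially no obstacle here; the only thing worth flagging is the implicit use of the fact that $\bigcurlywedge$ and $\bigcurlyvee$ are well-defined on finite subsets of $L_n([0,1])$ under the admissible order, which was already established right before Example~\ref{ex-adm-orders} using totality of $\preceq$. The proof uses neither the semi-vector space structure nor the boundary condition $(\mathcal{A}1)$, only monotonicity together with the hypothesis.
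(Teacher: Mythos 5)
Your proof is correct and follows essentially the same argument as the paper's: the forward direction specialises the averaging sandwich to a constant tuple, and the converse applies monotonicity $(\mathcal{A}2)$ between $\bigcurlywedge_{j=1}^m\mathbf{x}_j$ and $\bigcurlyvee_{j=1}^m\mathbf{x}_j$ and then collapses the outer terms by idempotence. Your closing remarks on why $\bigcurlywedge$ and $\bigcurlyvee$ are well defined under a total admissible order, and on which hypotheses are actually used, are accurate but not needed beyond what the paper already records.
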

 \begin{proof}
  ($\Rightarrow$)  Since $\mathcal{A}$ is an average, then for each $\mathbf{x}\in L_n([0,1])$ we have that $\mathbf{x}=\bigcurlywedge\limits_{j=1}^m \mathbf{x}\preceq\mathcal{A}(\mathbf{x},\ldots,\mathbf{x})\preceq \bigcurlyvee\limits_{j=1}^m \mathbf{x} =\mathbf{x}$ and therefore $\mathcal{A}$ is idempotent.

  ($\Leftarrow$) For each $\mathbf{x}_1,\ldots,\mathbf{x}_n\in L_n([0,1])$ consider $\mathbf{y}=\bigcurlywedge\limits_{j=1}^m \mathbf{x}_j$ and $\mathbf{z}=\bigcurlyvee\limits_{j=1}^m \mathbf{x}_j$. So, since $\mathcal{A}$ is idempotent and increasing with respect to $\preceq$, then

  $\bigcurlywedge\limits_{j=1}^m \mathbf{x}_j=\mathcal{A}(\mathbf{y},\ldots,\mathbf{y})\preceq\mathcal{A}(\mathbf{x}_1,\ldots,\mathbf{x}_m)\preceq \mathcal{A}(\mathbf{z},\ldots,\mathbf{z})=\bigcurlyvee\limits_{j=1}^m \mathbf{x}$ and therefore $\mathcal{A}$ is an average.
 \end{proof}

\begin{prop} \label{prop-A1toAn-tau}
 Let  $m$ and $n$ be non-zero natural numbers, $\tau:N_1^n\rightarrow N_1^n$ be a bijection and $A_1,\ldots,A_n$ be $m$-ary  aggregation functions such that $A_1\leq \cdots \leq A_n$ and they are strictly increasing when restricted to $L_n([0,1])$.
 Then $\widetilde{A_1\ldots A_n}:L_n([0,1])^m\rightarrow L_n([0,1])$ defined by
 $$\widetilde{A_1\ldots A_n}(\mathbf{x}_1,\ldots,\mathbf{x}_m)=(A_1(\pi_1(\mathbf{x}_1),\ldots,\pi_1(\mathbf{x}_m)),\ldots, A_n(\pi_n(\mathbf{x}_1),\ldots,\pi_n(\mathbf{x}_m)))$$
 is an  $m$-ary strict $n$-dimensional aggregation function with respect to $\preceq_{\tau}$. In addition, if $A_i$'s are conjunctive (disjunctive, internal) $m$-ary aggregation functions  for each $i\in N_1^n$ then $\widetilde{A_1\ldots A_n}$ is a conjunctive (disjunctive, average) $m$-ary $n$-dimensional aggregation function with respect to $\preceq_{\tau}$.
\end{prop}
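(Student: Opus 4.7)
The plan is to verify, in order: well-definedness of $\widetilde{A_1\ldots A_n}$, the boundary conditions $(\mathcal{A}1)$, monotonicity together with strictness with respect to $\preceq_\tau$, and then the three inheritance claims. For well-definedness, for each $i\in N_1^{n-1}$ I would chain two inequalities,
$$A_i(\pi_i(\mathbf{x}_1),\ldots,\pi_i(\mathbf{x}_m))\leq A_{i+1}(\pi_i(\mathbf{x}_1),\ldots,\pi_i(\mathbf{x}_m))\leq A_{i+1}(\pi_{i+1}(\mathbf{x}_1),\ldots,\pi_{i+1}(\mathbf{x}_m)),$$
where the first step uses the domination $A_i\leq A_{i+1}$ and the second uses monotonicity of $A_{i+1}$ combined with $\pi_i(\mathbf{x}_j)\leq \pi_{i+1}(\mathbf{x}_j)$; the boundary conditions are then immediate from $A_i(0,\ldots,0)=0$ and $A_i(1,\ldots,1)=1$.

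The core step, and the main obstacle, is monotonicity with respect to $\preceq_\tau$, which is lexicographic in the $\tau$-order and therefore does not reduce to coordinatewise monotonicity. Writing $\mathbf{z}=\widetilde{A_1\ldots A_n}(\mathbf{x}_1,\ldots,\mathbf{x}_m)$ and $\mathbf{w}=\widetilde{A_1\ldots A_n}(\mathbf{y}_1,\ldots,\mathbf{y}_m)$ under the assumption $\mathbf{x}_j\preceq_\tau \mathbf{y}_j$ for each $j$, I would prove by induction on $k\in N_1^n$ the following dichotomy: either there is some $k^*\leq k$ with $\pi_{\tau(i)}(\mathbf{z})=\pi_{\tau(i)}(\mathbf{w})$ for $i<k^*$ and $\pi_{\tau(k^*)}(\mathbf{z})<\pi_{\tau(k^*)}(\mathbf{w})$, or else $\pi_{\tau(i)}(\mathbf{x}_j)=\pi_{\tau(i)}(\mathbf{y}_j)$ for every $j$ and every $i\leq k$. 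The base case uses that $\mathbf{x}_j\preceq_\tau \mathbf{y}_j$ forces $\pi_{\tau(1)}(\mathbf{x}_j)\leq \pi_{\tau(1)}(\mathbf{y}_j)$, so that monotonicity of $A_{\tau(1)}$ gives $\pi_{\tau(1)}(\mathbf{z})\leq \pi_{\tau(1)}(\mathbf{w})$; equality here, combined with strictness of $A_{\tau(1)}$, forces coordinatewise equality of all $\pi_{\tau(1)}$-inputs, so the inductive hypothesis at the next $\tau$-position still applies. The inductive step is identical. Strictness of each $A_{\tau(i)}$ is precisely the lever that closes the equality case and unlocks the next position, which is why the hypothesis is essential. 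At $k=n$ we obtain $\mathbf{z}\prec_\tau \mathbf{w}$ or $\mathbf{z}=\mathbf{w}$. Strictness of $\widetilde{A_1\ldots A_n}$ then follows from the same induction applied to inputs with $\mathbf{x}_i=\mathbf{y}_i$ for $i\ne j$ and $\mathbf{x}_j\prec_\tau \mathbf{y}_j$.

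For the inheritance properties, if each $A_i$ is conjunctive then $A_i(\pi_i(\mathbf{x}_1),\ldots,\pi_i(\mathbf{x}_m))\leq \min_j \pi_i(\mathbf{x}_j) \leq \pi_i\bigl(\bigcurlywedge_{j=1}^m \mathbf{x}_j\bigr)$, where the last step uses that the $\preceq_\tau$-minimum equals some $\mathbf{x}_{j^*}$ and $\min_j\pi_i(\mathbf{x}_j)\leq \pi_i(\mathbf{x}_{j^*})$. Thus $\widetilde{A_1\ldots A_n}(\mathbf{x}_1,\ldots,\mathbf{x}_m)\leq_n^p \bigcurlywedge_{j=1}^m \mathbf{x}_j$, and admissibility of $\preceq_\tau$ (which refines $\leq_n^p$) upgrades this to the required $\preceq_\tau$-inequality. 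The disjunctive case is symmetric. For internality, if each $A_i$ is internal then each coordinate of $\widetilde{A_1\ldots A_n}(\mathbf{x},\ldots,\mathbf{x})$ must equal $\pi_i(\mathbf{x})$, which gives idempotency, and Proposition \ref{prop-idempotent-average} then yields the average property.
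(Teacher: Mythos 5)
Your proof is correct and, for the core of the argument, follows the same route as the paper: the decisive point in both is that strict increasingness of $A_{\tau(k)}$ at the first $\tau$-position where the inputs differ produces a strict inequality at that same position of the output, so the lexicographic comparison is preserved. You package this as an explicit induction on the $\tau$-positions that handles all $m$ arguments being changed simultaneously, which yields monotonicity and strictness in one pass; the paper only proves the single-argument (strictness) statement and leaves the reduction of $(\mathcal{A}2)$ to repeated single-argument changes implicit. Your well-definedness chain and the conjunctive/disjunctive cases match the paper's. The one genuinely different --- and in fact better --- step is the internal case: you observe that internality of each $A_i$ forces $A_i(\pi_i(\mathbf{x}),\ldots,\pi_i(\mathbf{x}))=\pi_i(\mathbf{x})$, hence idempotency of $\widetilde{A_1\ldots A_n}$, and then invoke Proposition \ref{prop-idempotent-average}. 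The paper instead tries to sandwich the output between two of the inputs, $\mathbf{x}_t\leq_n^p \widetilde{A_1\ldots A_n}(\mathbf{x}_1,\ldots,\mathbf{x}_m)\leq_n^p\mathbf{x}_s$ with $t$ and $s$ the extreme values of the indices $j_i$ realizing internality; that step is not justified as written, since $j_i$ varies with the coordinate $i$ and there is no reason that $\pi_i(\mathbf{x}_t)\leq\pi_i(\mathbf{x}_{j_i})\leq\pi_i(\mathbf{x}_s)$ for every $i$. Your idempotency route avoids this gap entirely, so it is the preferable argument for that part.
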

\begin{proof} Clearly, $\widetilde{A_1\ldots A_n}$  is well defined,
  $\widetilde{A_1\ldots A_n}(/0/,\ldots,/0/)=/0/$ and $\widetilde{A_1\ldots A_n}(/1/,\ldots,/1/)=/1/$. Let $j\in N_m$ and $\mathbf{x}_1,\ldots,\mathbf{x}_m,\mathbf{y}\in L_n([0,1])$ such that $\mathbf{x}_j\prec_{\tau} \mathbf{y}$. Then, $\exists k\in N_1^n\mbox{ s.t. }\pi_{\tau(k)}(\mathbf{x}_j)<\pi_{\tau(k)}(\mathbf{y})\mbox{ and }\forall i, 1\leq i < k, \pi_{\tau(i)}(\mathbf{x}_j)=\pi_{\tau(i)}(\mathbf{y})$. So, for each $1\leq i < k$,   we have that
 $$\begin{array}{ll}
    \pi_{\tau(i)}(\widetilde{A_1\ldots A_n}(\mathbf{x}_1,\ldots,\mathbf{x}_m)) & =A_{\tau(i)}(\pi_{\tau(i)}(\mathbf{x}_1),\ldots,\pi_{\tau(i)}(\mathbf{x}_m)) \\
    & = A_{\tau(i)}(\pi_{\tau(i)}(\mathbf{x}_1),\ldots,\pi_{\tau(i)}(\mathbf{x}_{j-1}), \pi_{\tau(i)}(\mathbf{y}),\pi_{\tau(i)}(\mathbf{x}_{j+1}),\ldots,\pi_{\tau(i)}(\mathbf{x}_m)) \\
    & =  \pi_{\tau(i)}(\widetilde{A_1\ldots A_n}(\mathbf{x}_1,\ldots,\mathbf{x}_{j-1}, \mathbf{y}, \mathbf{x}_{j+1},\ldots,\mathbf{x}_m))
   \end{array}
$$ and, because $A_{\tau(k)}$ when restricted to $L_n([0,1])$ is strictly increasing, we have that
 $$\begin{array}{ll}
    \pi_{\tau(k)}(\widetilde{A_1\ldots A_n}(\mathbf{x}_1,\ldots,\mathbf{x}_m)) & = A_{\tau(k)}(\pi_{\tau(k)}(\mathbf{x}_1),\ldots,\pi_{\tau(k)}(\mathbf{x}_m)) \\
    & < A_{\tau(k)}(\pi_{\tau(k)}(\mathbf{x}_1),\ldots,\pi_{\tau(k)}(\mathbf{x}_{j-1}), \pi_{\tau(k)}(\mathbf{y}),\pi_{\tau(k)}(\mathbf{x}_{j+1}),\ldots,\pi_{\tau(k)}(\mathbf{x}_m)) \\
    & =  \pi_{\tau(k)}(\widetilde{A_1\ldots A_n}(\mathbf{x}_1,\ldots\mathbf{x}_{j-1}, \mathbf{y}, \mathbf{x}_{j+1},\mathbf{x}_m))
    \end{array}$$
 Therefore,
 $$\widetilde{A_1\ldots A_n}(\mathbf{x}_1,\ldots,\mathbf{x}_m)\prec_{\tau}
 \widetilde{A_1\ldots A_n}(\mathbf{x}_1,\ldots,\mathbf{x}_{j-1}, \mathbf{y}, \mathbf{x}_{j+1},\ldots,\mathbf{x}_m).
$$
Consequently, $\widetilde{A_1\ldots A_n}$ is a $m$-ary strict $n$-dimensional aggregation function with respect to $\preceq_{\tau}$.

Now, suppose that all the $m$-dimensional aggregation functions $A_i$ are conjunctive. Then $A_i(\pi_i(\mathbf{x}_i),\ldots,\pi_i(\mathbf{x}_m))\leq \min (\pi_i(\mathbf{x}_i),\ldots,\pi_i(\mathbf{x}_m))$ for each $i\in N_1^n$. Therefore, $\widetilde{A_1\ldots A_n}(\mathbf{x}_1,\ldots,\mathbf{x}_m)\leq_n^p \bigwedge \{\mathbf{x}_1,\ldots,\mathbf{x}_m\}\leq_n^p \mathbf{x}_j$ for each $j\in N_1^m$. So, because, $\preceq_\tau$ is an admissible order, $\widetilde{A_1\ldots A_n}(\mathbf{x}_1,\ldots,\mathbf{x}_m)\preceq_\tau \bigwedge \{\mathbf{x}_1,\ldots,\mathbf{x}_m\}\preceq_\tau \bigcurlywedge\limits_{j=1}^m \mathbf{x}_j$. Analogously, we can prove that $\bigcurlyvee\limits_{j=1}^m \mathbf{x}_j\preceq_\tau \bigvee \{\mathbf{x}_1,\ldots,\mathbf{x}_m\}\preceq_\tau \widetilde{A_1\ldots A_n}(\mathbf{x}_1,\ldots,\mathbf{x}_m)$ whenever $A_j$ is disjunctive.

If the $A_i$'s are internal aggregation functions then $A_{i}(\pi_i(\mathbf{x}_1),\ldots,\pi_i(\mathbf{x}_m))=\pi_i(\mathbf{x}_{j_i})$ for some $j_i\in N_1^m$. Let $t=\min\limits_{i=1}^m j_i$ and $s=\max\limits_{i=1}^m j_i$. Then $\pi_i(\mathbf{x}_t)\leq A_i(\pi_i(\mathbf{x}_1),\ldots, \pi_i(\mathbf{x}_m))\leq \pi_i(\mathbf{x}_s)$ and therefore $\mathbf{x}_t\leq_n^p \widetilde{A_1\ldots A_n}(\mathbf{x}_1,\ldots,\mathbf{x}_m)\leq_n^p \mathbf{x}_s$ So, because $\preceq_\tau$ is an admissible order,

$$\bigcurlywedge\limits_{j=1}^m \mathbf{x}_j\preceq_\tau \mathbf{x}_t\preceq_\tau\widetilde{A_1\ldots A_n}(\mathbf{x}_1,\ldots,\mathbf{x}_m)\preceq_\tau\mathbf{x}_s\preceq_\tau\bigcurlyvee\limits_{j=1}^m \mathbf{x}_j.$$
Hence, $\widetilde{A_1\ldots A_n}$ is an average $m$-ary $n$-dimensional aggregation function with respect to $\preceq_{\tau}$.
\end{proof}

\begin{obs}
 In  Proposition \ref{prop-A1toAn-tau}, the condition that all $A_i$ restricted to $L_n([0,1])$ must be  strictly increascing  is necessary because if some $A_i$ does not satisfy this condition  then $\widetilde{A_1\ldots A_n}$ may not  be a $m$-ary $n$-dimensional aggregation function with respect to $\preceq_{\tau}$. Indeed, taking as $\tau$ the identity, $A_1(x_1,\ldots,x_m)=\ldots =A_{n-1}=\min(x_1,\ldots,x_m)$ and  $A_n(x_1,\ldots,x_m)=\frac{1}{m}\sum\limits_{i=1}^m x_i$ then $\mathbf{x}=(0.5,\ldots,0.5,1)\prec_\tau (0.5,\ldots,0.5,0.6,0.6)=\mathbf{y}$. However, \\
 $$\widetilde{A_1\ldots A_n}(\mathbf{x},\ldots,\mathbf{x},\mathbf{y})= (0.5,\ldots,0.5,0.5+\frac{0.1}{m})\prec_\tau (0.5,\ldots,0.5,\frac{m+1}{2m})=\widetilde{A_1\ldots A_n}(\mathbf{x},\ldots,\mathbf{x}).$$
\end{obs}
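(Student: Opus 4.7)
The plan is to build an explicit counterexample demonstrating that the strict-increasingness hypothesis in Proposition \ref{prop-A1toAn-tau} cannot be dropped: if some $A_i$ is merely weakly monotone on the relevant inputs, then $\widetilde{A_1\ldots A_n}$ can fail monotonicity with respect to $\preceq_\tau$. The cleanest setting I would target is $\tau = Id$, so that $\preceq_\tau$ is the lexicographic order; there, the comparison of two output vectors is decided by the first coordinate at which they differ, so if non-strict $A_i$'s force the early output coordinates to tie, the comparison ``falls through'' to a later coordinate, where a surprise reversal can occur because vectors in $L_n([0,1])$ are only weakly increasing. Exploiting this fall-through is the key mechanism.

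Concretely, I would choose $A_1 = \cdots = A_{n-1} = \min$ (monotone but not strict) and $A_n$ the arithmetic mean, so the chain $A_1 \leq \cdots \leq A_n$ holds. I would then seek $\mathbf{x}, \mathbf{y} \in L_n([0,1])$ with $n \geq 3$ so that $\mathbf{x} \prec_\tau \mathbf{y}$ is witnessed at some position $k < n$ while simultaneously $\pi_n(\mathbf{x}) > \pi_n(\mathbf{y})$; a natural candidate is $\mathbf{x} = (0.5, \ldots, 0.5, 1)$ and $\mathbf{y} = (0.5, \ldots, 0.5, 0.6, 0.6)$. Both lie in $L_n([0,1])$, they agree on positions $1, \ldots, n-2$, $\pi_{n-1}(\mathbf{x}) = 0.5 < 0.6 = \pi_{n-1}(\mathbf{y})$ so $\mathbf{x} \prec_\tau \mathbf{y}$, and yet $\pi_n(\mathbf{x}) = 1 > 0.6 = \pi_n(\mathbf{y})$, exactly the reversal I need.

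To finish I would compare $\widetilde{A_1\ldots A_n}(\mathbf{x}, \ldots, \mathbf{x}, \mathbf{y})$ with $\widetilde{A_1\ldots A_n}(\mathbf{x}, \ldots, \mathbf{x})$ coordinate by coordinate. At each position $i \leq n-1$ both vectors evaluate to $0.5$, with the crucial step being $\min(0.5, \ldots, 0.5, 0.6) = 0.5$ at position $n-1$ --- this is precisely where the non-strictness of $\min$ is used. At position $n$, however, replacing one input of value $1$ by $0.6$ strictly lowers the arithmetic mean. Hence $\widetilde{A_1\ldots A_n}(\mathbf{x}, \ldots, \mathbf{x}, \mathbf{y}) \prec_\tau \widetilde{A_1\ldots A_n}(\mathbf{x}, \ldots, \mathbf{x})$, contradicting the monotonicity required of an $n$-dimensional aggregation function since $\mathbf{y} \succ_\tau \mathbf{x}$. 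The main obstacle is calibrating the pair $(\mathbf{x}, \mathbf{y})$ so that all four constraints --- membership in $L_n([0,1])$, lex-comparability via an early coordinate, the reversal $\pi_n(\mathbf{x}) > \pi_n(\mathbf{y})$ at the position where the strict $A_n$ acts, and background values that let $\min$ absorb the intermediate increase --- hold simultaneously, which the $0.5$--$0.6$--$1$ choice achieves for every $n \geq 3$ and $m \geq 1$.
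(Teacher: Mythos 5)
Your proposal is correct and uses exactly the paper's counterexample (same $\tau=Id$, same $A_1=\cdots=A_{n-1}=\min$ and $A_n$ the arithmetic mean, same $\mathbf{x}=(0.5,\ldots,0.5,1)$ and $\mathbf{y}=(0.5,\ldots,0.5,0.6,0.6)$), with the same mechanism of forcing ties in the first $n-1$ coordinates so that the lexicographic comparison falls through to coordinate $n$, where the reversal $\pi_n(\mathbf{x})>\pi_n(\mathbf{y})$ flips the order. In fact your description of the $n$-th coordinate --- the mean of $(m-1)$ copies of $1$ and one $0.6$, i.e.\ $1-\frac{0.4}{m}$, versus $1$ --- is the correct computation, whereas the paper's displayed values $0.5+\frac{0.1}{m}$ and $\frac{m+1}{2m}$ appear to be a miscalculation; the strict inequality in the last coordinate, and hence the conclusion, are unaffected (just note that one needs $m\geq 2$, not $m\geq 1$, for the $\min$ at position $n-1$ to absorb the $0.6$).
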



\begin{teo}\label{teo-OWA-Ln-preceq} Let   $\preceq$ be an admissible order on $L_n([0,1])$, $\omega=(w_1,\ldots,w_m)$ be a weighting vector and $\mathcal{L}'_n([0,1])=\prt{L_n([0,1]),+,\star,\preceq}$ be an ordered semi-vector space  over $U$.
 Then the  function $OWA_\omega^{\mathcal{L}'_n([0,1])}: L_n([0,1])^m\rightarrow L_n([0,1])$ defined by
\begin{equation}\label{eq-OWA-preceq}
 OWA_\omega^{\mathcal{L}'_n([0,1])}(\mathbf{x}_1,\ldots,\mathbf{x}_m)=\sum\limits_{j=1}^m w_j\star \mathbf{x}_{(j)}
\end{equation}
where  the sum is with respect to $+$, $\mathbf{x}_{(j+1)}\preceq \mathbf{x}_{(j)}$ for each $j\in N_1^{m-1}$ and $\{(1),\ldots,(m)\}=\{1,\ldots,m\}$, is an idempotent $m$-ary $n$-dimensional  aggregation function with respect to $\preceq$ called $m$-ary $n$-dimensional  ordered weighted aggregation function with respect to $\preceq$.
 \end{teo}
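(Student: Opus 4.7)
The plan is to establish in turn (a) idempotency, (b) the boundary condition $(\mathcal{A}1)$, and (c) monotonicity $(\mathcal{A}2)$, since the boundary conditions drop out of idempotency applied to $/0/$ and $/1/$. For idempotency, I observe that for a constant tuple $(\mathbf{x},\ldots,\mathbf{x})$ any reordering is itself, so $OWA_\omega^{\mathcal{L}'_n([0,1])}(\mathbf{x},\ldots,\mathbf{x})=\sum_{j=1}^m w_j\star\mathbf{x}$. Iterated application of (SV6) collapses this to $(w_1\pluscirc\cdots\pluscirc w_m)\star\mathbf{x}$. Because $w_1+\cdots+w_m=1$ in the usual real sum and every partial sum lies in $[0,1]$, the bounded addition $\pluscirc$ in the underlying weak semifield $U$ returns exactly $1$, and then (SV4) gives $1\star\mathbf{x}=\mathbf{x}$. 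Substituting $\mathbf{x}=/0/$ and $\mathbf{x}=/1/$ yields $(\mathcal{A}1)$.

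For monotonicity the key step is a sorting preservation lemma for the total order $\preceq$: if $\mathbf{x}_j\preceq\mathbf{y}_j$ for each $j\in N_1^m$, and $\mathbf{x}_{(1)}\succeq\cdots\succeq\mathbf{x}_{(m)}$, $\mathbf{y}_{(1)}\succeq\cdots\succeq\mathbf{y}_{(m)}$ are the decreasing rearrangements, then $\mathbf{x}_{(j)}\preceq\mathbf{y}_{(j)}$ for every $j$. I would prove this by a counting argument using totality: $\mathbf{x}_{(j)}$ is characterized as the $\preceq$-largest element $\mathbf{v}$ among the $\mathbf{x}_i$'s such that $|\{i : \mathbf{x}_i\succeq\mathbf{v}\}|\geq j$. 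Whenever $\mathbf{x}_i\succeq\mathbf{x}_{(j)}$ the hypothesis gives $\mathbf{y}_i\succeq\mathbf{x}_i\succeq\mathbf{x}_{(j)}$, so at least $j$ of the $\mathbf{y}_i$'s dominate $\mathbf{x}_{(j)}$ with respect to $\preceq$; by the analogous characterization this forces $\mathbf{y}_{(j)}\succeq\mathbf{x}_{(j)}$.

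Once the sorting lemma is in hand, the axioms of the ordered semi-vector space do the rest: (SV8) gives $w_j\star\mathbf{x}_{(j)}\preceq w_j\star\mathbf{y}_{(j)}$ for each $j\in N_1^m$, and then an inductive application of (SV9), combined with (SV2) to match up the summands, produces
\[ \sum_{j=1}^m w_j\star\mathbf{x}_{(j)}\;\preceq\;\sum_{j=1}^m w_j\star\mathbf{y}_{(j)}, \]
which is precisely $(\mathcal{A}2)$. The main obstacle I anticipate is the sorting preservation lemma; it is standard in the real-valued OWA theory but needs to be re-verified here because $\preceq$ is only a total order on $L_n([0,1])$ (not equipped with the arithmetic of $\mathbb{R}$), so a purely order-theoretic pigeonhole argument is the right tool. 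Everything else is a routine unpacking of the axioms (SV1)--(SV9) that are available by the assumption that $\mathcal{L}'_n([0,1])$ is an ordered semi-vector space over $U$.
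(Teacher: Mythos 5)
Your proof is correct, and on the monotonicity step it is actually more careful than the paper's own argument. Both proofs handle idempotency (and hence the boundary conditions) identically, via (SV6), the fact that the bounded sum of the weights equals $1$, and (SV4). For monotonicity, however, the paper argues one coordinate at a time: it replaces the $t$-th sorted component $\mathbf{x}_{(t)}$ by some $\mathbf{y}\succeq\mathbf{x}_{(t)}$, applies (SV8) and (SV9), and then asserts that the resulting expression $\bigl(\sum_{j\neq t} w_j\star\mathbf{x}_{(j)}\bigr)+(w_t\star\mathbf{y})$ equals $OWA_\omega^{\mathcal{L}'_n([0,1])}$ of the modified tuple --- a step that tacitly assumes $\mathbf{y}$ occupies the same position $t$ in the new decreasing rearrangement, which need not hold when $\mathbf{y}\succ\mathbf{x}_{(t-1)}$. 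Your sorting-preservation lemma (if $\mathbf{x}_j\preceq\mathbf{y}_j$ for all $j$ then $\mathbf{x}_{(j)}\preceq\mathbf{y}_{(j)}$ for all $j$, proved by the pigeonhole/counting characterization of the $j$-th largest element under a total order) sidesteps this entirely: it lets you compare the two fully sorted tuples term by term and then apply (SV8) once per summand and (SV9) inductively. This buys a complete and cleanly modular argument at the cost of one extra lemma; the paper's route is shorter but, as written, leaves the rearrangement issue unaddressed. Your observation that the lemma must be re-proved order-theoretically (rather than imported from real-valued OWA theory) is exactly the right concern, and your proof of it is sound.
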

\begin{proof}
For each $\mathbf{x} \in L_n([0,1])$ we have that,

 $$\begin{array}{ll}
    OWA_\omega^{\mathcal{L}'_n([0,1])}(\mathbf{x},\ldots,\mathbf{x}) & = \sum\limits_{j=1}^m w_j\star \mathbf{x}  \,\,\mbox{, by Eq. (\ref{eq-OWA-preceq})} \\
    & =  \left(\sum\limits_{j=1}^m w_j\right ) \star \mathbf{x} \,\, \mbox{, by (SV6)} \\
    & = 1 \boxdot \mathbf{x} \\
    & = \mathbf{x} \,\, \mbox{, by (SV4)}
   \end{array}
   $$
   i.e. $OWA_\omega^{\mathcal{L}'_n([0,1])}$ is idempotent and therefore, $OWA_\omega^{\mathcal{L}'_n([0,1])}(/0/_n,\ldots,/0/_n)=/0/_n$ and $OWA_\omega^{\mathcal{L}'_n([0,1])}(/1/_n,\ldots,/1/_n)=/1/_n$.

   Let $\mathbf{x}_1,\ldots,\mathbf{x}_m,\mathbf{y}\in L_n([0,1])$ and $t\in N_1^m$.
   If $\mathbf{x}_{(t)}\preceq \mathbf{y}$ then by (SV8) we have that $w_t\star \mathbf{x}_{(t)}\preceq w_t \star \mathbf{y}$ and therefore,

   $$\begin{array}{ll}
    OWA_\omega^{\mathcal{L}'_n([0,1])}(\mathbf{x}_1,\ldots,\mathbf{x}_m) & = \sum\limits_{j=1}^m w_j\star \mathbf{x}_{(j)}  \,\,\mbox{, by Eq. (\ref{eq-OWA-preceq})} \\
    &= \left ( \sum\limits_{j=1, j\neq t}^m w_j\star \mathbf{x}_{(j)} \right ) + (w_t\star \mathbf{x}_{(t)}) \,\, \mbox{, by (SV1) and (SV2)} \\
    & \preceq \left ( \sum\limits_{j=1, j\neq t}^m w_j\star \mathbf{x}_{(j)} \right ) + (w_t\star \mathbf{y}) \,\, \mbox{, by (SV9)} \\
    & =  OWA_\omega^{\mathcal{L}'_n([0,1])}(\mathbf{x}_1,\ldots,\mathbf{x}_{(t)-1},\mathbf{y},\mathbf{x}_{(t)},\ldots,\mathbf{x}_m) \,\,\mbox{, by Eq. (\ref{eq-OWA-preceq})}
    \end{array}
   $$
   Therefore, $OWA_\omega^{\mathcal{L}'_n([0,1])}$ is increasing with respect to $\preceq$ and consequently is an idempotent  $m$-ary $n$-dimensional  aggregation function with respect to $\preceq$.
\end{proof}

\begin{coro}
Let   $\preceq$ be an admissible order on $L_n([0,1])$,  $\omega=(w_1,\ldots,w_m)$ be a weighting vector and $\mathcal{L}'_n([0,1])=\prt{L_n([0,1]),+,\star,\preceq}$ be an ordered semi-vector space  over $U$.
 Then $OWA_\omega^{\mathcal{L}'_n([0,1])}$ is a  $m$-ary $n$-dimensional  average aggregation function with respect to $\preceq$.
\end{coro}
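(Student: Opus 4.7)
The corollary is essentially a one-line consequence of the two results that immediately precede it, so my plan is to invoke them in sequence rather than unpack any new computation.

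First, I would observe that Theorem~\ref{teo-OWA-Ln-preceq} has just established that $OWA_\omega^{\mathcal{L}'_n([0,1])}$ is an $m$-ary $n$-dimensional aggregation function with respect to $\preceq$, and moreover that it is idempotent: the theorem's proof explicitly shows $OWA_\omega^{\mathcal{L}'_n([0,1])}(\mathbf{x},\ldots,\mathbf{x})=\mathbf{x}$ for every $\mathbf{x}\in L_n([0,1])$, using axioms (SV6) and (SV4) together with the fact that $\sum_{j=1}^m w_j=1$ because $\omega$ is a weighting vector.

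Next, I would apply Proposition~\ref{prop-idempotent-average}, which states that for an $m$-ary $n$-dimensional aggregation function with respect to an admissible order, being an average is equivalent to being idempotent. Since the hypotheses of that proposition are satisfied (we have an admissible order $\preceq$, and $OWA_\omega^{\mathcal{L}'_n([0,1])}$ is an $m$-ary $n$-dimensional aggregation function with respect to $\preceq$ by Theorem~\ref{teo-OWA-Ln-preceq}), idempotency directly yields that
\[
\bigcurlywedge_{j=1}^m \mathbf{x}_j \preceq OWA_\omega^{\mathcal{L}'_n([0,1])}(\mathbf{x}_1,\ldots,\mathbf{x}_m) \preceq \bigcurlyvee_{j=1}^m \mathbf{x}_j
\]
for every $\mathbf{x}_1,\ldots,\mathbf{x}_m \in L_n([0,1])$, which is precisely the definition of an average aggregation function.

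There is no genuine obstacle here: the heavy lifting was already done in Theorem~\ref{teo-OWA-Ln-preceq} (monotonicity and idempotency via the semi-vector space axioms) and in Proposition~\ref{prop-idempotent-average} (the idempotent $\Leftrightarrow$ average equivalence). The only thing to double-check is that the hypothesis ``$\omega$ is a weighting vector'' is used, since the idempotency step relied on $\sum_{j=1}^m w_j = 1$; this is part of the standing assumption in the corollary's statement, so nothing extra is needed.
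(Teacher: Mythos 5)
Your proposal is correct and follows exactly the paper's route: the paper's own proof is simply ``Straightforward from Proposition~4 and Theorem~3,'' i.e.\ idempotency from the theorem combined with the idempotent-iff-average equivalence. Your additional remarks about where $\sum_{j=1}^m w_j=1$ enters are accurate and merely spell out what the paper leaves implicit.
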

\begin{proof}
 Straightforward from Proposition \ref{prop-idempotent-average} and Theorem \ref{teo-OWA-Ln-preceq}.
\end{proof}

\begin{coro}
Let   $\tau:N_1^m\rightarrow N_1^m$ be a bijection  and  $\omega=(w_1,\ldots,w_m)$ be a weighting vector.
 Then $OWA_\omega^{\mathcal{L}_n([0,1])}$ is a  $m$-ary $n$-dimensional  idempotent and average aggregation function with respect to $\preceq_\tau$.
\end{coro}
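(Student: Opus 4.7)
The plan is to prove this corollary as a direct instantiation of the previously established results, with essentially no new work required. First, I would invoke Proposition \ref{prop-tau-scalar-prod}, which guarantees that for any bijection $\tau:N_1^n\rightarrow N_1^n$ (note that the relevant bijection here is the one defining $\preceq_\tau$, which is on $N_1^n$), the pair $\prt{\mathcal{L}_n([0,1]),\preceq_\tau}$ is an ordered semi-vector space over the weak semifield $U$. This means we can instantiate the abstract data of Theorem \ref{teo-OWA-Ln-preceq} with $+ = \circplus$, $\star = \odot$ and the admissible order $\preceq = \preceq_\tau$.

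Having verified the hypothesis, the second step is to apply Theorem \ref{teo-OWA-Ln-preceq} directly: it yields that $OWA_\omega^{\mathcal{L}_n([0,1])}$ is an idempotent $m$-ary $n$-dimensional aggregation function with respect to $\preceq_\tau$. Finally, since any idempotent $m$-ary $n$-dimensional aggregation function with respect to an admissible order is automatically an average (by Proposition \ref{prop-idempotent-average}, or equivalently by the immediately preceding corollary), we conclude that $OWA_\omega^{\mathcal{L}_n([0,1])}$ is also an average. Thus no new computation is needed; the only thing to be careful about is correctly matching the hypotheses of the quoted results, and in particular observing that the admissibility of $\preceq_\tau$ together with Proposition \ref{prop-tau-scalar-prod} supply exactly the ingredients (SV1)--(SV9) demanded by Theorem \ref{teo-OWA-Ln-preceq}. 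There is no substantive obstacle here; the corollary is essentially a bookkeeping consequence.
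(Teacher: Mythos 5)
Your proposal is correct and follows exactly the route the paper takes: cite Proposition \ref{prop-tau-scalar-prod} to get that $\prt{\mathcal{L}_n([0,1]),\preceq_\tau}$ is an ordered semi-vector space over $U$, apply Theorem \ref{teo-OWA-Ln-preceq} for idempotence, and conclude averageness via Proposition \ref{prop-idempotent-average}. Your remark that the bijection defining $\preceq_\tau$ should act on $N_1^n$ rather than $N_1^m$ is a correct reading of a slip in the statement.
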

\begin{proof}
 Straightforward from Propositions \ref{prop-tau-scalar-prod} and  \ref{prop-idempotent-average} and Theorem \ref{teo-OWA-Ln-preceq}.
\end{proof}

\begin{teo}\label{teo-Mw-Ln-preceq}
  Let   $\preceq$ be an admissible order on $L_n([0,1])$, $\omega=(w_1,\ldots,w_m)$ be a weighting vector and $\mathcal{L}'_n([0,1])=\prt{L_n([0,1]),+,\star,\preceq}$ be an ordered semi-vector space  over $U$.
 Then the  function  $\mathcal{M}_\omega^{\mathcal{L}'_n([0,1])}: L_n([0,1])^m\rightarrow L_n([0,1])$ defined by
\begin{equation}\label{eq-Mw-preceq}
 \mathcal{M}_{\omega}^{\mathcal{L}'_n([0,1])}(\mathbf{x}_1,\ldots,\mathbf{x}_m)=\sum\limits_{j=1}^m w_j\star \mathbf{x}_{j}
\end{equation}
where the sum is with respect to $+$, is an idempotent $m$-ary $n$-dimensional  aggregation function with respect to $\preceq$ called of $m$-ary $n$-dimensional weighted average with respect to $\preceq$.
\end{teo}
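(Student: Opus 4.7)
The plan is to mirror the proof of Theorem \ref{teo-OWA-Ln-preceq} almost verbatim, since $\mathcal{M}_\omega^{\mathcal{L}'_n([0,1])}$ differs from $OWA_\omega^{\mathcal{L}'_n([0,1])}$ only in that no prior reordering $\mathbf{x}_{(1)}\preceq\cdots\preceq\mathbf{x}_{(m)}$ of the inputs is performed. In fact the absence of the reordering step slightly simplifies the argument, since monotonicity will be checked position-by-position on the argument list and no reshuffling has to be tracked.

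First I would establish idempotency. For any $\mathbf{x}\in L_n([0,1])$, Eq.~(\ref{eq-Mw-preceq}) gives $\mathcal{M}_\omega^{\mathcal{L}'_n([0,1])}(\mathbf{x},\ldots,\mathbf{x})=\sum_{j=1}^m w_j\star\mathbf{x}$. Iterating (SV6) collapses the right-hand side to $(w_1\dotplus\cdots\dotplus w_m)\star\mathbf{x}$; because $\sum_j w_j=1$ in the ordinary sense and each $w_j\geq 0$, the bounded sum equals $1$, so (SV4) yields $\mathbf{x}$. Instantiating at $\mathbf{x}=/0/$ and $\mathbf{x}=/1/$ supplies the boundary conditions required by $(\mathcal{A}1)$.

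Next I would verify monotonicity. Fix $t\in N_1^m$ and $\mathbf{x}_1,\ldots,\mathbf{x}_m,\mathbf{y}\in L_n([0,1])$ with $\mathbf{x}_t\preceq\mathbf{y}$. By (SV8) we have $w_t\star\mathbf{x}_t\preceq w_t\star\mathbf{y}$; using (SV1) and (SV2) I decompose
\[
\sum_{j=1}^m w_j\star\mathbf{x}_j=\Bigl(\sum_{j\neq t} w_j\star\mathbf{x}_j\Bigr)+\bigl(w_t\star\mathbf{x}_t\bigr),
\]
and then (SV9), with $\sum_{j\neq t}w_j\star\mathbf{x}_j$ as the common summand, yields the single-coordinate monotonicity
\[
\mathcal{M}_\omega^{\mathcal{L}'_n([0,1])}(\mathbf{x}_1,\ldots,\mathbf{x}_m)\preceq\mathcal{M}_\omega^{\mathcal{L}'_n([0,1])}(\mathbf{x}_1,\ldots,\mathbf{x}_{t-1},\mathbf{y},\mathbf{x}_{t+1},\ldots,\mathbf{x}_m).
\]
A finite induction altering one coordinate at a time, combined with transitivity of $\preceq$, then promotes this to the full condition $(\mathcal{A}2)$: $\mathbf{x}_j\preceq\mathbf{y}_j$ for all $j$ implies $\mathcal{M}_\omega^{\mathcal{L}'_n([0,1])}(\mathbf{x}_1,\ldots,\mathbf{x}_m)\preceq\mathcal{M}_\omega^{\mathcal{L}'_n([0,1])}(\mathbf{y}_1,\ldots,\mathbf{y}_m)$.

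No substantive obstacle is expected; the theorem is essentially a clean corollary of the ordered semi-vector space axioms (SV1), (SV2), (SV4), (SV6), (SV8) and (SV9). The only points that call for a pinch of care are making the iterated application of (SV6) explicit by a routine induction on $m$, and checking that the bounded sum of the weights coincides with their ordinary sum, which is immediate from $w_j\geq 0$ and $\sum_j w_j=1$.
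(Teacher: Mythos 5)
Your proposal is correct and takes essentially the same route as the paper, which simply declares the proof ``analogous to the proof of Theorem \ref{teo-OWA-Ln-preceq}'': idempotency via (SV6) and (SV4) together with the boundary cases, and monotonicity via (SV8), the (SV1)--(SV2) decomposition, and (SV9). Your explicit remarks about iterating (SV6) and about the bounded sum of the weights agreeing with the ordinary sum are exactly the small details the paper leaves implicit.
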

\begin{proof} Analogous to the proof of Theorem \ref{teo-OWA-Ln-preceq}.
\end{proof}

\begin{coro}
Let   $\preceq$ be an admissible order on $L_n([0,1])$,  $\omega=(w_1,\ldots,w_m)$ be a weighting vector and $\mathcal{L}'_n([0,1])=\prt{L_n([0,1]),+,\star,\preceq}$ be an ordered semi-vector space  over $U$.
 Then $M_\omega^{\mathcal{L}'_n([0,1])}$ is a  $m$-ary $n$-dimensional  average aggregation function with respect to $\preceq$.
\end{coro}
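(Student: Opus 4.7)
The plan is to combine two results already established in the excerpt: Theorem \ref{teo-Mw-Ln-preceq} gives us that $\mathcal{M}_{\omega}^{\mathcal{L}'_n([0,1])}$ is an idempotent $m$-ary $n$-dimensional aggregation function with respect to $\preceq$, while Proposition \ref{prop-idempotent-average} provides the equivalence, under any admissible order, between idempotency and being an average aggregation function.

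First, I would invoke Theorem \ref{teo-Mw-Ln-preceq} on the data $\preceq$, $\omega$ and $\mathcal{L}'_n([0,1])$. Since the hypotheses of this corollary are exactly the hypotheses of that theorem (namely, that $\preceq$ is an admissible order, $\omega$ is a weighting vector, and $\mathcal{L}'_n([0,1])$ is an ordered semi-vector space over $U$), the theorem directly yields that $\mathcal{M}_{\omega}^{\mathcal{L}'_n([0,1])}$ is a well-defined $m$-ary $n$-dimensional aggregation function with respect to $\preceq$, and moreover that it is idempotent, i.e.\ $\mathcal{M}_{\omega}^{\mathcal{L}'_n([0,1])}(\mathbf{x},\ldots,\mathbf{x})=\mathbf{x}$ for every $\mathbf{x}\in L_n([0,1])$.

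Second, I would apply Proposition \ref{prop-idempotent-average} to the aggregation function $\mathcal{A}=\mathcal{M}_{\omega}^{\mathcal{L}'_n([0,1])}$. The proposition asserts that, for the admissible order $\preceq$, an $m$-ary $n$-dimensional aggregation function is an average if and only if it is idempotent. Since we have just established idempotency, the proposition immediately gives that $\mathcal{M}_{\omega}^{\mathcal{L}'_n([0,1])}$ is an average aggregation function with respect to $\preceq$, which is the desired conclusion.

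There is essentially no obstacle here, as the statement is a direct corollary: the only point to check is that the hypotheses of the corollary match exactly those required by both Theorem \ref{teo-Mw-Ln-preceq} and Proposition \ref{prop-idempotent-average}, which they do. The proof is therefore a one-line chaining of the two previous results, mirroring the structure of the analogous corollary already written for $OWA_\omega^{\mathcal{L}'_n([0,1])}$.
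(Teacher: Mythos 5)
Your proposal is correct and matches the paper's own argument exactly: the paper proves this corollary as ``straightforward from Proposition \ref{prop-idempotent-average} and Theorem \ref{teo-Mw-Ln-preceq}'', which is precisely the two-step chaining (idempotency from the theorem, then the idempotent--average equivalence from the proposition) that you describe. No gaps.
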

\begin{proof}
 Straightforward from Proposition \ref{prop-idempotent-average} and Theorem \ref{teo-Mw-Ln-preceq}.
\end{proof}

\begin{coro}
Let   $\tau:N_1^m\rightarrow N_1^m$ be a bijection  and  $\omega=(w_1,\ldots,w_m)$ be a weighting vector.
 Then $M_\omega^{\mathcal{L}_n([0,1])}$ is a  $m$-ary $n$-dimensional  idempotent and average aggregation function with respect to $\preceq_\tau$.
\end{coro}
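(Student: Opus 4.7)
The plan is to assemble the conclusion directly from three previously established ingredients, in exactly the same manner as the analogous corollary for $OWA_\omega^{\mathcal{L}_n([0,1])}$ was obtained. First I would invoke Proposition \ref{prop-tau-scalar-prod} to conclude that, for any bijection $\tau:N_1^n\rightarrow N_1^n$, the pair $\prt{\mathcal{L}_n([0,1]),\preceq_\tau}$ is an ordered semi-vector space over the weak semifield $U$. This supplies the hypothesis needed to instantiate the general construction in Theorem \ref{teo-Mw-Ln-preceq} with $\preceq$ specialized to $\preceq_\tau$ and with $+,\star$ taken to be $\circplus,\odot$.

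Next I would apply Theorem \ref{teo-Mw-Ln-preceq} itself to this instance; it yields that $\mathcal{M}_\omega^{\mathcal{L}_n([0,1])}$ is an idempotent $m$-ary $n$-dimensional aggregation function with respect to $\preceq_\tau$. This immediately delivers the ``idempotent'' half of the corollary's conclusion.

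Finally, to obtain the ``average'' half, I would invoke Proposition \ref{prop-idempotent-average}, which asserts that, relative to any admissible order on $L_n([0,1])$, an $m$-ary $n$-dimensional aggregation function is an average if and only if it is idempotent. Since $\preceq_\tau$ is admissible and $\mathcal{M}_\omega^{\mathcal{L}_n([0,1])}$ has just been shown idempotent, it follows at once that it is also an average aggregation function with respect to $\preceq_\tau$.

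There is essentially no obstacle here: the entire argument is a ``straightforward'' chaining of Propositions \ref{prop-tau-scalar-prod} and \ref{prop-idempotent-average} with Theorem \ref{teo-Mw-Ln-preceq}, mirroring the proof already given for the $OWA$ corollary. The only minor care to take is noting that the statement uses the notation $M_\omega$ whereas Theorem \ref{teo-Mw-Ln-preceq} uses $\mathcal{M}_\omega$; these refer to the same object once $(+,\star)$ is taken to be $(\circplus,\odot)$.
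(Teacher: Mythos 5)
Your proposal is correct and coincides with the paper's own argument, which simply cites Propositions \ref{prop-tau-scalar-prod} and \ref{prop-idempotent-average} together with Theorem \ref{teo-Mw-Ln-preceq} as ``straightforward.'' You have merely spelled out the same chain of citations in the same order.
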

\begin{proof}
 Straightforward from Propositions \ref{prop-tau-scalar-prod} and  \ref{prop-idempotent-average} and Theorem \ref{teo-Mw-Ln-preceq}.
\end{proof}

In the following, we will explore some properties of $M_\omega^{\mathcal{L}_n([0,1])}$.

\begin{lem}\label{lem-preceq-tau-strict}
 Let $\tau:N_1^m\rightarrow N_1^m$ be a bijection, $\lambda\in (0,1)$ and  $\mathbf{x},\mathbf{y},\mathbf{z}\in L_n([0,1])$ such that $\mathbf{x}\prec_\tau \mathbf{y}$ and $\pi_n(\mathbf{z}) \leq 1-\lambda$. Then $\mathbf{z}\circplus \lambda\odot \mathbf{x}\prec_\tau \mathbf{z}\circplus \lambda\odot \mathbf{y}$.
\end{lem}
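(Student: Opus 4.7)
The plan is to unwind the definition of $\prec_\tau$ from Example \ref{ex-adm-orders}(1), translate the information about $\mathbf{x}$ and $\mathbf{y}$ into componentwise information, and then check the same defining condition for $\mathbf{z}\circplus \lambda\odot \mathbf{x}$ versus $\mathbf{z}\circplus \lambda\odot \mathbf{y}$. Since $\mathbf{x}\prec_\tau \mathbf{y}$, fix an index $k\in N_1^n$ such that $\pi_{\tau(k)}(\mathbf{x})<\pi_{\tau(k)}(\mathbf{y})$ and $\pi_{\tau(i)}(\mathbf{x})=\pi_{\tau(i)}(\mathbf{y})$ for every $i$ with $1\leq i<k$. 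It suffices to produce the same kind of witness (at the same index $k$) for the pair $(\mathbf{z}\circplus \lambda\odot \mathbf{x},\, \mathbf{z}\circplus \lambda\odot \mathbf{y})$.

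The crucial observation is that the hypothesis $\pi_n(\mathbf{z})\leq 1-\lambda$ lets us get rid of the truncation in the bounded addition: for every component $i\in N_1^n$, $\pi_i(\mathbf{z})\leq \pi_n(\mathbf{z})\leq 1-\lambda$ and $\pi_i(\mathbf{x}),\pi_i(\mathbf{y})\leq 1$, so
\[
\pi_i(\mathbf{z}\circplus \lambda\odot \mathbf{x})=\min(1,\pi_i(\mathbf{z})+\lambda\pi_i(\mathbf{x}))=\pi_i(\mathbf{z})+\lambda\pi_i(\mathbf{x}),
\]
and analogously for $\mathbf{y}$. This is the single point where the hypothesis on $\mathbf{z}$ is used, and it is what prevents the strict inequality from being flattened by the cap at $1$; this is the main (and really only) subtle step.

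With truncation out of the way, the conclusion is immediate. For $1\leq i<k$,
\[
\pi_{\tau(i)}(\mathbf{z}\circplus \lambda\odot \mathbf{x})=\pi_{\tau(i)}(\mathbf{z})+\lambda\pi_{\tau(i)}(\mathbf{x})=\pi_{\tau(i)}(\mathbf{z})+\lambda\pi_{\tau(i)}(\mathbf{y})=\pi_{\tau(i)}(\mathbf{z}\circplus \lambda\odot \mathbf{y}),
\]
and at $i=k$, using $\lambda>0$ together with $\pi_{\tau(k)}(\mathbf{x})<\pi_{\tau(k)}(\mathbf{y})$,
\[
\pi_{\tau(k)}(\mathbf{z}\circplus \lambda\odot \mathbf{x})=\pi_{\tau(k)}(\mathbf{z})+\lambda\pi_{\tau(k)}(\mathbf{x})<\pi_{\tau(k)}(\mathbf{z})+\lambda\pi_{\tau(k)}(\mathbf{y})=\pi_{\tau(k)}(\mathbf{z}\circplus \lambda\odot \mathbf{y}).
\]
Hence $k$ witnesses $\mathbf{z}\circplus \lambda\odot \mathbf{x}\prec_\tau \mathbf{z}\circplus \lambda\odot \mathbf{y}$, which is the desired conclusion. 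No further compatibility lemma is needed, because the argument is entirely componentwise once the truncation is ruled out.
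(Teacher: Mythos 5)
Your proof is correct and follows essentially the same route as the paper: fix the witness index $k$ from $\mathbf{x}\prec_\tau\mathbf{y}$, use $\pi_i(\mathbf{z})\leq\pi_n(\mathbf{z})\leq 1-\lambda$ to show the bounded addition never truncates, and then verify the defining condition of $\prec_\tau$ componentwise at the same index $k$. If anything, your version is slightly tidier, since you make the non-truncation explicit for every component and keep the $\tau$-indices throughout, whereas the paper's write-up drops them.
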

\begin{proof}
 Since $\mathbf{x}\prec_\tau \mathbf{y}$ then there exists $k\in N_1^n$ such that $\pi_k(\mathbf{x})<\pi_k(\mathbf{y})$ and $\pi_i(\mathbf{x})=\pi_i(\mathbf{y})$ for each $i <k$. So, because $\lambda> 0$, then
  $\lambda \pi_k(\mathbf{x})<\lambda \pi_k(\mathbf{y})$ and $\lambda \pi_i(\mathbf{x})=\lambda \pi_i(\mathbf{y})$ for each $i <k$,  and therefore,
 $\lambda\odot \mathbf{x}\prec_\tau \lambda\odot \mathbf{y}$. So, once $\lambda<1$ and $\pi_k(\mathbf{z}) \leq 1-\lambda$, we have that
 $$\begin{array}{ll}
   \pi_k(\mathbf{z})\dotplus \pi_k(\lambda\odot \mathbf{x}) & =\pi_k(\mathbf{z})\dotplus \lambda\pi_k(\mathbf{x}) \\
   & =\pi_k(\mathbf{z})+ \lambda\pi_k(\mathbf{x}) \\
   & < \pi_k(\mathbf{z})+ \lambda\pi_k(\mathbf{y}) \\
  & =\pi_k(\mathbf{z})\dotplus \lambda\pi_k(\mathbf{y}) \\
   & = \pi_k(\mathbf{z})\dotplus \pi_k(\lambda\odot \mathbf{y})
  \end{array}
  $$
  Thus, since clearly,  $\pi_i(\mathbf{z})\dotplus \pi_i(\lambda\odot \mathbf{x}) =
  \pi_i(\mathbf{z})\dotplus \pi_i(\lambda\odot \mathbf{y})$ for each $i<k$ then $\mathbf{z}\circplus \lambda\odot \mathbf{x}\prec_\tau \mathbf{z}\circplus \lambda\odot \mathbf{y}$.
\end{proof}

\begin{prop}
 Let   $\tau:N_1^m\rightarrow N_1^m$ be a bijection  and  $\omega=(w_1,\ldots,w_m)$ be a weighting vector. Then $M_\omega^{\mathcal{L}_n([0,1])}$ is
 \begin{enumerate}
 \item strict if and only if $w_j>0$ for each $j\in N_1^m$;
  \item symmetric if and only if $w_j=\frac{1}{m}$ for each $j\in N_1^m$;
  \item additive with respect to $\circplus$, i.e. $M_\omega^{\mathcal{L}_n([0,1])}(\mathbf{x}_1\circplus \mathbf{y}_1,\ldots,\mathbf{x}_m\circplus \mathbf{y}_m)= M_\omega^{\mathcal{L}_n([0,1])}(\mathbf{x}_1,\ldots,\mathbf{x}_m)\circplus M_\omega^{\mathcal{L}_n([0,1])}(\mathbf{y}_1,\ldots,\mathbf{y}_m)$ for each $\mathbf{x}_1,\mathbf{y}_1,\ldots,\mathbf{x}_m,\mathbf{y}_m\in L_n([0,1])$;
  \item homogeneous, i.e. $M_\omega^{\mathcal{L}_n([0,1])}(\lambda\odot \mathbf{x}_1,\ldots,\lambda\odot \mathbf{x}_m)=\lambda\odot M_\omega^{\mathcal{L}_n([0,1])}(\mathbf{x}_1,\ldots,\mathbf{x}_m)$ for each $\lambda\in [0,1]$ and $\mathbf{x}_1,\ldots,\mathbf{x}_m\in L_n([0,1])$.
 \end{enumerate}
\end{prop}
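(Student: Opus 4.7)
The four properties are essentially independent, so I would treat them in turn, invoking different pieces of the preceding machinery.

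For (1), the ``only if'' direction is settled by a one-line counterexample: if $w_t=0$ then $\mathcal{M}_\omega^{\mathcal{L}_n([0,1])}$ does not depend on its $t$-th argument, so strictness fails. For ``if'', suppose all weights are positive, fix $t \in N_1^m$ and $\mathbf{x}_1,\ldots,\mathbf{x}_m,\mathbf{y}\in L_n([0,1])$ with $\mathbf{x}_t \prec_\tau \mathbf{y}$, and set $\mathbf{z}=\sum_{j\neq t} w_j\odot \mathbf{x}_j$ (the sum being with respect to $\circplus$). The decisive observation is that $\pi_n(\mathbf{z})\leq \sum_{j\neq t}w_j\,\pi_n(\mathbf{x}_j)\leq \sum_{j\neq t}w_j=1-w_t$; this puts us exactly in the hypothesis of Lemma \ref{lem-preceq-tau-strict}, applied with $\lambda=w_t\in(0,1)$ (the edge case $m=1$, where $w_t=1$, is trivially strict because $\mathcal{M}_\omega^{\mathcal{L}_n([0,1])}(\mathbf{x}_1)=\mathbf{x}_1$). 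The lemma then delivers $\mathbf{z}\circplus w_t\odot \mathbf{x}_t\prec_\tau \mathbf{z}\circplus w_t\odot \mathbf{y}$, which is precisely the required strict inequality for $\mathcal{M}_\omega^{\mathcal{L}_n([0,1])}$.

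For (2), the ``if'' direction is immediate from the associativity and commutativity of $\circplus$ once all $w_j$ agree. For ``only if'', I would evaluate $\mathcal{M}_\omega^{\mathcal{L}_n([0,1])}$ on the ``indicator'' tuple $(/0/,\ldots,/0/,/1/,/0/,\ldots,/0/)$ with the $/1/$ in the $i$-th slot. Since $r\odot /0/=/0/$ and $/0/$ is the identity for $\circplus$, the output is $/w_i/$. Symmetry forces $/w_i/=/w_j/$ for every $i,j$, hence all weights coincide; combined with $\sum_j w_j=1$ this yields $w_j=1/m$.

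For (3) and (4) the work is purely algebraic within the semi-vector-space axioms. For additivity, applying (SV5) summand-by-summand gives $w_j\odot(\mathbf{x}_j\circplus \mathbf{y}_j)=(w_j\odot \mathbf{x}_j)\circplus(w_j\odot \mathbf{y}_j)$, and a standard rearrangement via (SV1) and (SV2) of the resulting double $\circplus$-sum factors it into $\mathcal{M}_\omega^{\mathcal{L}_n([0,1])}(\mathbf{x}_1,\ldots,\mathbf{x}_m)\circplus \mathcal{M}_\omega^{\mathcal{L}_n([0,1])}(\mathbf{y}_1,\ldots,\mathbf{y}_m)$. For homogeneity, an induction based on (SV5) pushes $\lambda\odot$ through the $\circplus$-sum, and then (SV3) together with the commutativity of the scalar product $\cdot$ in $U$ yields $\lambda\odot(w_j\odot \mathbf{x}_j)=(\lambda\cdot w_j)\odot \mathbf{x}_j=w_j\odot(\lambda\odot \mathbf{x}_j)$, giving the desired identity.

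The main obstacle is part (1): all the other items reduce either to direct substitution of specific inputs or to formal manipulations in the semi-vector space, whereas strictness genuinely requires the quantitative control $\pi_n(\mathbf{z})\leq 1-w_t$ that lets us legitimately invoke Lemma \ref{lem-preceq-tau-strict}; without the normalization $\sum_j w_j=1$ that bound would not be available.
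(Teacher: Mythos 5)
Your proposal follows essentially the same route as the paper's proof: the indicator-tuple evaluations $(/0/,\ldots,/1/,\ldots,/0/)\mapsto /w_j/$ for the ``only if'' directions of (1) and (2), the decomposition $\bigl(\sum_{j\neq t}w_j\odot\mathbf{x}_j\bigr)\circplus w_t\odot\mathbf{x}_t$ combined with Lemma \ref{lem-preceq-tau-strict} for strictness, and the componentwise/axiomatic manipulations via (SV5), (SV3) and commutativity of the scalars for (3) and (4). The only difference is that you explicitly verify the hypothesis $\pi_n(\mathbf{z})\leq 1-w_t$ of Lemma \ref{lem-preceq-tau-strict} (and dispose of the degenerate case $m=1$), a check the paper's proof leaves implicit; this is a refinement of, not a departure from, the published argument.
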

\begin{proof}
  \begin{enumerate}
 \item ($\Rightarrow$) For each $j\in N_1^m$,

 $$\begin{array}{ll}
/0/ & =M_\omega^{\mathcal{L}_n([0,1])}(/0/,\ldots,/0/) \\
& \prec_\tau M_\omega^{\mathcal{L}_n([0,1])}(\underbrace{/0/,\ldots,/0/}_{(j-1)-times},/1/,\underbrace{/0/,\ldots,/0/}_{(m-j)-times}) \\
& =w_j\odot /1/ \\
& =/w_j/
   \end{array}$$
   and therefore $0< w_j$ for each $j\in N_1^m$.

   ($\Leftarrow$)  Let $\mathbf{x}_1,\ldots,\mathbf{x}_m,\mathbf{y}\in L_n([0,1])$. Since $0< w_j$ for each $j\in N_1^m$ then for each $\mathbf{x},\mathbf{y}\in L_n([0,1])$ we have that $w_j \odot \mathbf{x} \prec_\tau w_j \odot \mathbf{y}$ whenever $\mathbf{x} \prec_\tau \mathbf{y}$. So,  for each $t\in  N_1^m$ such that $\mathbf{x}_t\prec_\tau \mathbf{y}$, we have that

 $$\begin{array}{ll}
   \mathcal{M}_{\omega}^{\mathcal{L}_n([0,1])}(\mathbf{x}_1,\ldots,\mathbf{x}_m)  & =\sum\limits_{j=1}^m w_j\odot \mathbf{x}_{j} \\
   & = \left (\sum\limits_{j=1,j\neq t}^m w_j\odot \mathbf{x}_{j} \right ) \circplus w_t\odot \mathbf{x}_{t} \\
   & \prec_\tau    \left (\sum\limits_{j=1,j\neq t}^m w_j\odot \mathbf{x}_{j} \right ) \circplus w_t\odot \mathbf{y} \,\, \mbox{  (by Lemma \ref{lem-preceq-tau-strict})} \\
   & = \mathcal{M}_{\omega}^{\mathcal{L}_n([0,1])}(\mathbf{x}_1,\ldots,\mathbf{x}_{t-1},\mathbf{y},\mathbf{x}_{t+1},\ldots,\mathbf{x}_m)
  \end{array}
 $$

 \item ($\Rightarrow$) Let $j\in N_1^m$  then
 $$\begin{array}{ll}
   /w_j/& =\mathcal{M}_{\omega}^{\mathcal{L}_n([0,1])}(\underbrace{/0/,\ldots,/0/}_{(j-1)-times},/1/,\underbrace{/0/,\ldots,/0/}_{(m-j)-times}) \\
   & =\mathcal{M}_{\omega}^{\mathcal{L}_n([0,1])}(\underbrace{/0/,\ldots,/0/}_{(\tau(j)-1)-times},/1/,\underbrace{/0/,\ldots,/0/}_{(m-\tau(j))-times}) \\
   & =/w_{\tau(j)}/.
  \end{array}
$$
Therefore, $w_j=w_{\tau(j)}$ for each bijection $\tau$ and, consequently, $w_j=\frac{1}{m}$ for each $j\in N_1^m$.

 ($\Leftarrow$) Straightforward.

 \item Let $\mathbf{x}_1,\ldots,\mathbf{x}_m,\mathbf{y}_1,\ldots,\mathbf{y}_m\in L_n([0,1])$. Then

 $$\begin{array}{ll}
    M_\omega^{\mathcal{L}_n([0,1])}(\mathbf{x}_1{\circplus} \mathbf{y}_1,\ldots,\mathbf{x}_m{\circplus} \mathbf{y}_m) & = \sum\limits_{j=1}^m w_j\odot (\mathbf{x}_j\circplus \mathbf{y}_j) \\
    & = \sum\limits_{j=1}^m (w_j (\pi_1(\mathbf{x}_j)\dotplus \pi_1(\mathbf{y}_j)),\ldots,w_j(\pi_n(\mathbf{x}_j)\dotplus \pi_n(\mathbf{y}_j) ))\\
    & = \sum\limits_{j=1}^m (w_j \pi_1(\mathbf{x}_j)\dotplus w_j \pi_1(\mathbf{y}_j),\ldots,w_j \pi_n(\mathbf{x}_j)\dotplus w_j\pi_n(\mathbf{y}_j)) \\
    & = \sum\limits_{j=1}^m ((w_j \pi_1(\mathbf{x}_j),\ldots,w_j \pi_n(\mathbf{x}_j)) \dotplus (w_j \pi_1(\mathbf{y}_j),\ldots, w_j\pi_n(\mathbf{y}_j)) ) \\
    & =  \left( \sum\limits_{j=1}^m w_j\odot \mathbf{x}_j\right) \circplus  \sum\limits_{j=1}^m w_j\odot  \mathbf{y}_j \\
    & = M_\omega^{\mathcal{L}_n([0,1])}(\mathbf{x}_1,\ldots,\mathbf{x}_m)  \circplus M_\omega^{\mathcal{L}_n([0,1])}(\mathbf{y}_1,\ldots,\mathbf{y}_m)
   \end{array}
$$

\item Let  $\mathbf{x}_1,\ldots,\mathbf{x}_m\in L_n([0,1])$ and $\lambda\in [0,1]$. Then
$$\begin{array}{ll}
   M_\omega^{\mathcal{L}_n([0,1])}(\lambda\odot \mathbf{x}_1,\ldots,\lambda\odot\mathbf{x}_m) & = \sum\limits_{j=1}^m w_j\odot (\lambda\odot \mathbf{x}_j)\\
   & = \sum\limits_{j=1}^m \lambda \odot (w_j\odot \mathbf{x}_j) \,\,\mbox{ (by (SV3) and (WF2))}\\
   & = \lambda\odot \sum\limits_{j=1}^m w_j\odot \mathbf{x}_j \,\,\mbox{ (by (SV5))}\\
   & = \lambda\odot M_\omega^{\mathcal{L}_n([0,1])}( \mathbf{x}_1,\ldots,\mathbf{x}_m)
  \end{array}
$$
 \end{enumerate}
\end{proof}

\section{A Decision-Making Method Based on $n$-dimensional Aggregation Functions}

Multiple criteria group decision making (MCGDM) methods are procedures to rank a set of alternatives, by considering some criteria and the opinion of some experts on how much the alternatives satisfy each one of the criteria. In general, the criteria may be in conflict to each other,  which makes more difficult the generation of the ranking in a reasonable way.

Let $A = \{a_1, \ldots, a_p\}$ $(p \geq 2)$ be a set of alternatives, $E = \{e_1, \ldots, e_n\}$ $(n \geq 2)$ be a group of experts, and $C = \{c_1, \ldots, c_m\}$ $(m \geq 2)$ be a set of attributes or criteria. From the opinion of each expert $e_k$, we generate a decision matrix $R^k = (R^k_{ij})_{p\times m}$  such that $R^k_{ij}$ is  the evaluation of $e_k$ of how much the alternative $a_i$ meets the criterion $c_j$, which is expressed by a value in $[0,1]$.

We will consider the immersion $\sigma:[0,1]^n\rightarrow L_n([0,1])$, i.e. the function
 \begin{equation}\label{eq-sigma}
  \sigma(x_1,\ldots,x_n)=(x_{(1)},\ldots,x_{(n)})
 \end{equation}
such that $(x_{(1)},\ldots,x_{(n)})$ is the element of $L_n([0,1])$  which is a permutation of $(x_1,\ldots,x_n)$. Observe that $(x_{(1)},\ldots,x_{(n)})$  always exists and is unique. For example, $\sigma(0.2,0.5,0.2,0.4,0.5,0.7,0.5)=(0.2,0.2,0.4,0.5,0.5,0.5,0.7)$. However, as this example shows, the corresponding permutation may no be unique.

We propose the following general multi-criteria and multi-expert decision-making method:

\begin{enumerate}[label=\textbf{Step \arabic*.}]
 \item For every  expert $e_k\in E$, generate the matrix $R^k$ with his/her evaluations of how much each alternative meets each criterion

\item  Generate the collective matrix $R^*$ from the   $R^k$'s with $k=1,\ldots,n$,  by ordering it, so that $R^*_{ij}$  belongs to $L_n([0,1])$ for each $i\in N_1^p$ and $j\in N_1^m$, i.e., $R^*_{ij}=\sigma(R^1_{ij},\ldots,R^n_{ij})$.

\item  Select a strictly positive weighting vector $\omega=(w_1,\ldots,w_m)$  which reflects the importance of each criterium $c_j$ in the decision making process.

\item Choose an admissible order $\preceq$ on $L_n([0,1])$ and a commutative  $m$-ary $n$-dimensional aggregation function $\mathcal{A}$, with respect to $\preceq$.

\item  Apply  $\mathcal{A}$  to each row of $R^*$, considering \textbf{Step 3}, i.e. $s_i=\mathcal{A}(R^*_{i1},\ldots,R^*_{im})$ is the $L_n([0,1])$-score of the  alternative $a_i$.

\item  Determine the ranking of the alternatives in $A$ from the $L_n([0,1])$-scores obtained in the \textbf{Step 5}, taking into account the admissible order $\preceq$ choice in the \textbf{Step 4}.

\end{enumerate}

 Many MCGDM methods had been proposed but most of them do not  consider any study of their general properties and therefore, we have no guarantee or evidence that the resulting ranking is reasonable. For example, the first illustrative example in \cite{MC11} presents 12 different rankings obtained from the proposed method when considering 12 different aggregation function, and in  \cite{dSBB15,dSBC15} for that same illustrative example, 3 new different rankings were obtained. That is, at all, we get 15 different ranking for the same problem and we have no mathematical foundation to determine which of these ranking is best one. Nevertheless, it is possible to determine some good properties for decision making methods, as made in \cite{BMP2009} for decision making methods based on preference relations and  in the light of Arrow's social choice theory (see \cite{Arrow63,Mord15}).

In the following, we present two reasonable principles that any MCGDM methods based on decision matrices should  satisfy, in our opinion:

\begin{description}
 \item {\bf Increasingness:} If the value of some $R_{ij}^k$ is increased then the ranking of $a_i$ not can decrease.
 \item {\bf Domination:} If the alternative $a_i$ is better evaluated  than $a_j$  in all the criteria and in the opinion of all the experts, then  the ranking of $a_i$ must be better than the ranking of $a_j$.
 \item {\bf Insensibility to indexations:} The order (indexation) of the expert, alternatives and criteria is irrelevant, i.e. different orders of the experts must never modify the final ranking of the alternatives.
\end{description}

  The first principle is guaranteed in our method. In fact, when $R_{ij}^k$  is increased to  $R'^k_{ij}$, for some  $i\in N_1^p$, $j\in N_1^m$ and $k\in N_1^n$,
then    $R^*_{ij}\leq_n^p R'^*_{ij}$,  where $R^*$ and $R'^*$ are the collective matrices generated in the step 2 before and after the increasing of  $R_{ij}^k$.
 So, since $\preceq$ is an admissible order and $\mathcal{A}$ is an aggregation function with respect to this admissible order, we have that $s_i\preceq s'_i$, where $s_i$ and $s'_i$ are  the $L_n([0,1])$-scores, computed in the step 5, of the  alternative $a_i$ before and after this increasing, respectively, wehereas the $L_n([0,1])$-scores of the  other alternatives remain the same. Therefore, if $a_j$ is worse ranked than $a_i$ before of the change, then $s_j\preceq s_i\preceq s'_i$. Hence,  $a_j$ will remain worse ranked than $a_i$ after  of the change.

 The domination condition is also satisfied by the proposed method. Indeed, suppose that an alternative $a_i$ is better evaluated  than $a_j$  in all the criteria for all the experts, i.e. $R^k_{jl}\leq R^k_{il}$ for each $k\in N_1^n$ and $l\in N_1^m$. Then, $R^*_{jl}\leq_n^p R^*_{il}$ and therefore $R^*_{jl}\preceq R^*_{il}$. So, because $\mathcal{A}$ is a $m$-ary aggregation function with respect to $\preceq$,  we have that $\mathcal{A}(R^*_{j1},\ldots,R^*_{jm})\preceq \mathcal{A}(R^*_{i1},\ldots,R^*_{im})$ and therefore, $s_j\preceq s_i$ which means that the ranking of the alternative $a_j$ is worse than the ranking of alternative $a_i$.

Finally,  for each permutation $\rho_u$ of $N_1^u$ with $u\in \{m,n,p\}$, define $P^k_{ij}=R^{\rho_n(k)}_{\rho_p(i)\rho_m(j)}$ and $t_i$ the  $L_n([0,1])$-score of the  alternative $b_i=a_{\rho_p(i)}$ obtained by the method. Since $\sigma$ and $\mathcal{A}$ are commutative, then $t_i=s_{\rho_p(i)}$ for each $i\in N_1^p$ and therefore, the ranking of the alternative $a_{\rho_p(i)}$ is the same as the alternative $b_i$, but $b_i=a_{\rho_p(i)}$. Therefore, the ranking is not modified by the permutations and consequently, we can claim that the proposed method also satisfies the unsensibility to indexations principle.


\subsection{An energy police selection problem under a multidimensional fuzzy environment}

As an illustrative example of the proposed method based on $n$-dimensional aggregation functions with respect to an admissible order, we present an energy policy problem adapted from \cite{XuZ}. Since the energy issue has major economic, social and environmental impacts, addressing the right ways to deal with this issue affects economic and environmental development in societies. Therefore, selecting the most appropriate energy policy is very significant. Suppose there are five energy projects, i.e., our set of alternatives is $\{a_1, a_2, a_3, a_4, a_5\}$. Such alternatives are evaluated according to the criteria: technology ($C_1$), environment ($C_2$), socio-political ($C_3$) and economic ($C_4$). We will consider that five experts ($e_1$,$e_2$,$e_3$,$e_4$,$e_5$) provide their decision matrices, i.e. matrices $R^k$ (where $k$ denote that is the decision matrix of expert $e_k$) such that each position $(i,j)$ in $R^k$ contain a value $R_{ij}^k$ in $[0,1]$ corresponding to their belief or evaluation of how much the alternative $a_i$ attends the criteria $C_j$. The decision matrices provided by the five experts are shown in Tables \ref{Table_e1}--\ref{Table_e5} (Step 1).

\begin{table}[h!]
	\begin{minipage}{.30\textwidth}
		\caption{Decision matrix  of the expert $e_1$}
		\label{Table_e1}
		\centering
		\setlength{\tabcolsep}{2pt}
		\begin{tabular}{c|cccc}
			\hline
			 $\scriptstyle R^1$ & $\scriptstyle C_1$ & $\scriptstyle C_2$ & $\scriptstyle C_3$ & $\scriptstyle C_4$ \\ \hline
			$\scriptstyle a_1$ & $\scriptscriptstyle 0.4$ & $\scriptscriptstyle  0.7$ & $\scriptscriptstyle 0.2$ & $\scriptscriptstyle 0.3$ \\
			$\scriptstyle a_2$ & $\scriptscriptstyle 0.5$ & $\scriptscriptstyle 0.9$ & $\scriptscriptstyle 0.1$ & $\scriptscriptstyle 0.4$\\
			$\scriptstyle a_3$ & $\scriptscriptstyle 0.6$ & $\scriptscriptstyle 0.6 $ & $\scriptscriptstyle 0.5$ & $\scriptscriptstyle 0.4$\\
			$\scriptstyle a_4$ & $\scriptscriptstyle 0.8$ & $\scriptscriptstyle 0.7$ & $\scriptscriptstyle 0.8$ & $\scriptscriptstyle 0.6$\\
			$\scriptstyle a_5$ & $\scriptscriptstyle 0.6$ & $\scriptscriptstyle 0.4$ & $\scriptscriptstyle 0.7$ & $\scriptscriptstyle0.7$\\
			\hline
		\end{tabular}
	\end{minipage}
	\hspace{.4cm}
	\begin{minipage}{.30\textwidth}
		\caption{Decision matrix  of the expert $e_2$}
		\label{Table_e2}
		\centering
		\setlength{\tabcolsep}{2pt}
		\begin{tabular}{c|cccc}
			\hline
			$\scriptstyle R^2$ & $\scriptstyle C_1$ & $\scriptstyle C_2$ & $\scriptstyle C_3$ & $\scriptstyle C_4$ \\ \hline
			$\scriptstyle a_1$ & $\scriptscriptstyle 0.5$ & $\scriptscriptstyle  0.7$ & $\scriptscriptstyle 0.5$ & $\scriptscriptstyle 0.5$ \\
			$\scriptstyle a_2$ & $\scriptscriptstyle 0.5$ & $\scriptscriptstyle 0.5$ & $\scriptscriptstyle 0.1$ & $\scriptscriptstyle 0.4$\\
			$\scriptstyle a_3$ & $\scriptscriptstyle 0.7$ & $\scriptscriptstyle 0.6 $ & $\scriptscriptstyle 0.3 $ & $\scriptscriptstyle 0.6$\\
			$\scriptstyle a_4$ & $\scriptscriptstyle 0.7$ & $\scriptscriptstyle 0.2$ & $\scriptscriptstyle 0.8$ & $\scriptscriptstyle 0.8$\\
			$\scriptstyle a_5$ & $\scriptscriptstyle 0.9$ & $\scriptscriptstyle 0.6$ & $\scriptscriptstyle 0.8$ & $\scriptscriptstyle 0.3$\\
			\hline
		 \end{tabular}
		\end{minipage}
		\hspace{.4cm}
		\begin{minipage}{.30\textwidth}
		\caption{Decision matrix  of the expert $e_3$}
		\label{Table_e3}
		\centering
		\setlength{\tabcolsep}{2pt}
		\begin{tabular}{c|cccc}
			\hline
			$\scriptstyle R^3$ & $\scriptstyle C_1$ & $\scriptstyle C_2$ & $\scriptstyle C_3$ & $\scriptstyle C_4$ \\ \hline
			$\scriptstyle a_1$ & $\scriptscriptstyle 0.4$ & $\scriptscriptstyle  0.8$ & $\scriptscriptstyle 0.2$ & $\scriptscriptstyle 0.9$ \\
			$\scriptstyle a_2$ & $\scriptscriptstyle 0.3$ & $\scriptscriptstyle 0.7$ & $\scriptscriptstyle 0.6$ & $\scriptscriptstyle 0.7$\\
			$\scriptstyle a_3$ & $\scriptscriptstyle 0.7$ & $\scriptscriptstyle 0.6$ & $\scriptscriptstyle 0.5 $ & $\scriptscriptstyle 0.4$\\
			$\scriptstyle a_4$ & $\scriptscriptstyle 0.4$ & $\scriptscriptstyle 0.4$ & $\scriptscriptstyle 0.1$ & $\scriptscriptstyle 0.8$\\
			$\scriptstyle a_5$ & $\scriptscriptstyle 0.1$ & $\scriptscriptstyle 0.6$ & $\scriptscriptstyle 0.7$ & $\scriptscriptstyle 0.6$\\
			\hline
		\end{tabular}	
		\end{minipage}
\end{table}

\begin{table}[h!]
\hspace{3.0 cm}
	\begin{minipage}{.23\textwidth}
		\caption{Decision matrix  of the expert $e_4$}
		\label{Table_e4}
		\centering
		\setlength{\tabcolsep}{2pt}
		\begin{tabular}{c|cccc}
			\hline
			$\scriptstyle R^4$ & $\scriptstyle C_1$ & $\scriptstyle C_2$ & $\scriptstyle C_3$ & $\scriptstyle C_4$ \\ \hline
			$\scriptstyle a_1$ & $\scriptscriptstyle 0.3$ & $\scriptscriptstyle  0.9$ & $\scriptscriptstyle 0.4$ & $\scriptscriptstyle 0.3$ \\
			$\scriptstyle a_2$ & $\scriptscriptstyle 0.3$ & $\scriptscriptstyle 0.2$ & $\scriptscriptstyle 0.8$ & $\scriptscriptstyle 0.3$\\
			$\scriptstyle a_3$ & $\scriptscriptstyle 0.7$ & $\scriptscriptstyle 0.9$ & $\scriptscriptstyle 0.3 $ & $\scriptscriptstyle 0.6$\\
			$\scriptstyle a_4$ & $\scriptscriptstyle 0.8$ & $\scriptscriptstyle 0.4$ & $\scriptscriptstyle 0.8$ & $\scriptscriptstyle 0.9$\\
			$\scriptstyle a_5$ & $\scriptscriptstyle 0.3$ & $\scriptscriptstyle 0.8$ & $\scriptscriptstyle 0.9$ & $\scriptscriptstyle 0.9$\\
			\hline
		\end{tabular}
	\end{minipage}
	\hspace{1.5cm}
	\begin{minipage}{.23\textwidth}
		\caption{Decision matrix  of the expert $e_5$}
		\label{Table_e5}
		\centering
		\setlength{\tabcolsep}{2pt}
		\begin{tabular}{c|cccc}
			\hline
			$\scriptstyle R^5$ & $\scriptstyle C_1$ & $\scriptstyle C_2$ & $\scriptstyle C_3$ & $\scriptstyle C_4$ \\ \hline
			$\scriptstyle a_1$ & $\scriptscriptstyle 0.5$ & $\scriptscriptstyle  0.1$ & $\scriptscriptstyle 0.5$ & $\scriptscriptstyle 0.6$ \\
			$\scriptstyle a_2$ & $\scriptscriptstyle 0.3$ & $\scriptscriptstyle 0.6$ & $\scriptscriptstyle 0.5$ & $\scriptscriptstyle 0.7$\\
			$\scriptstyle a_3$ & $\scriptscriptstyle 0.6$ & $\scriptscriptstyle 0.6$ & $\scriptscriptstyle 0.7 $ & $\scriptscriptstyle 0.6$\\
			$\scriptstyle a_4$ & $\scriptscriptstyle 0.3$ & $\scriptscriptstyle 0.7$ & $\scriptscriptstyle 0.1$ & $\scriptscriptstyle 0.6$\\
			$\scriptstyle a_5$ & $\scriptscriptstyle 0.7$ & $\scriptscriptstyle 0.7$ & $\scriptscriptstyle 0.8$ & $\scriptscriptstyle 0.6$\\
			\hline
		\end{tabular}
	\end{minipage}
\end{table}

		

If we take together the values on the same positions of the $R^k$'s , we generate  the
Table \ref{Table_Hesitant}, used as starting point in \cite{XuZ}.  Observe that some equal evaluations given by different experts collapsed in this hesitant fuzzy matrix.  The main difference between our approach and that of Xu and Zhang \cite{XuZ} is the use of $n$-dimensional fuzzy sets rather than hesitant fuzzy sets (for more details about hesitant fuzzy sets one may see \cite{Rosa,Torra} and  \cite{Bed-THFS,Bed-THFN} for typical hesitant fuzzy sets) in the decision matrix and the use of admissible orders which reduces the number of cases where alternatives with different initial evaluations lead to the same final ranking.
Now, following the proposed method, we construct  the $L_n([0,1])$-valued collective matrix  $R^*$ in Table \ref{Table_nDim} (Step 2).

\begin{table}[h!]
		\caption{Hesitant fuzzy collective decision matrix in \cite{XuZ}}
		\label{Table_Hesitant}
		\centering
		\setlength{\tabcolsep}{2pt}
		\begin{tabular}{c|cccc}
			\hline
			 $\scriptstyle R$ & $\scriptstyle C_1$ & $\scriptstyle C_2$ & $\scriptstyle C_3$ & $\scriptstyle C_4$ \\ \hline
			$\scriptstyle a_1$ & $\scriptscriptstyle \{0.5, 0.4, 0.3\}$ & $\scriptscriptstyle \{0.9, 0.8, 0.7, 0.1\}$ & $\scriptscriptstyle \{0.5, 0.4, 0.2\}$ & $\scriptscriptstyle \{0.9, 0.6, 0.5, 0.3\}$ \\
			$\scriptstyle a_2$ & $\scriptscriptstyle \{0.5, 0.3\}$ & $\scriptscriptstyle \{0.9,0.7,0.6,0.5,0.2\}$ & $\scriptscriptstyle \{0.8,0.6,0.5,0.1\}$ & $\scriptscriptstyle \{0.7,0.4,0.3\}$\\
			$\scriptstyle a_3$ & $\scriptscriptstyle \{0.7,0.6\}$ & $\scriptscriptstyle \{0.9,0.6\}$ & $\scriptscriptstyle \{0.7,0.5,0.3\}$ & $\scriptscriptstyle \{0.6,0.4\}$\\
			$\scriptstyle a_4$ & $\scriptscriptstyle \{0.8,0.7,0.4,0.3\}$ & $\scriptscriptstyle \{0.7,0.4,0.2\}$ & $\scriptscriptstyle \{0.8,0.1\}$ & $\scriptscriptstyle \{0.9,0.8,0.6\}$\\
			$\scriptstyle a_5$ & $\scriptscriptstyle \{0.9,0.7,0.6,0.3,0.1\}$ & $\scriptscriptstyle \{0.8,0.7,0.6,0.4\}$ & $\scriptscriptstyle \{0.9,0.8,0.7\}$ & $\scriptscriptstyle \{0.9,0.7,0.6,0.3\}$\\
			\hline
		\end{tabular}
\end{table}
\begin{table}[h!]
	\caption{$n$-dimensional interval collective decision matrix}
	\label{Table_nDim}
	\centering
	\setlength{\tabcolsep}{2pt}
	\begin{tabular}{c|cccc}
		\hline
		$\scriptstyle R^*$ & $\scriptstyle C_1$ & $\scriptstyle C_2$ & $\scriptstyle C_3$ & $\scriptstyle C_4$ \\ \hline
		$\scriptstyle a_1$ & $\scriptscriptstyle (0.3, 0.4, 0.4, 0.5,0.5)$ & $\scriptscriptstyle (0.1,0.7, 0.7, 0.8 , 0.9)$ & $\scriptscriptstyle (0.2, 0.2, 0.4, 0.5,0.5)$ & $\scriptscriptstyle (0.3, 0.3, 0.5, 0.6, 0.9)$ \\
		$\scriptstyle a_2$ & $\scriptscriptstyle (0.3,0.3,0.3, 0.5, 0.5)$ & $\scriptscriptstyle (0.2, 0.5, 0.6, 0.7, 0.9)$ & $\scriptscriptstyle (0.1, 0.1, 0.5, 0.6, 0.8)$ & $\scriptscriptstyle (0.3, 0.4, 0.4,0.7,0.7)$\\
		$\scriptstyle a_3$ & $\scriptscriptstyle (0.6,0.6,0.7, 0.7, 0.7)$ & $\scriptscriptstyle (0.6, 0.6,0.6,0.6,0.9)$ & $\scriptscriptstyle (0.3,0.3,0.5, 0.5, 0.7)$ & $\scriptscriptstyle (0.4, 0.4,0.6,0.6,0.6)$\\
		$\scriptstyle a_4$ & $\scriptscriptstyle (0.3, 0.4, 0.7, 0.8,0.8)$ & $\scriptscriptstyle (0.2, 0.4, 0.4,0.7, 0.7)$ & $\scriptscriptstyle (0.1, 0.1,0.8,0.8,0.8)$ & $\scriptscriptstyle (0.6,0.6, 0.8,0.8, 0.9)$\\
		$\scriptstyle a_5$ & $\scriptscriptstyle (0.1, 0.3, 0.6, 0.7, 0.9)$ & $\scriptscriptstyle (0.4, 0.6, 0.6, 0.7, 0.8)$ & $\scriptscriptstyle (0.7,0.7, 0.8,0.8, 0.9)$ & $\scriptscriptstyle (0.3, 0.6,0.6, 0.7, 0.9)$\\
		\hline
	\end{tabular}
\end{table}

The selected weighting vector (Step 3) is $\omega = (0.2341, 0.2474, 0.3181, 0.2004)$ (as in \cite{XuZ}) and the admissible order on $\mathcal{L}_n([0,1])$ chosen (Step 4) is $\preceq_\tau$ (as in Example \ref{ex-adm-orders}) for $\tau(1)=3, \tau(2)=2, \tau(3)=4, \tau(4)=1$ and $\tau(5)=5$ (notice that for this illustrative example the choice of $\tau$ is ad-hoc). The $m$-ary $n$-dimensional aggregation function with respect to $\preceq_\tau$ chosen in the step 4  is $\mathcal{M}_\omega^{\mathcal{L}_n([0,1])}$ defined in Equation (\ref{eq-Mw-preceq}) where the semi-vector space $\mathcal{L}_n([0,1])$ is the given in Theorem \ref{teo-SVS}.
In the step 5,  $\mathcal{M}_\omega^{\mathcal{L}_n([0,1])}$ is applied to each row  of the $R^*$ resulting in the following  $L_n([0,1])$-scores for each alternative:

{\setlength\arraycolsep{1pt}
\begin{eqnarray*}
	s_1 & = & \mathcal{M}_\omega^{\mathcal{L}_n([0,1])}(R^*_{11}, \dots, R^*_{14}) \\
	& = &  0.2341 \odot (0.3, 0.4, 0.4, 0.5,0.5) \circplus 0.2474 \odot (0.1,0.7, 0.7, 0.8 , 0.9) \circplus \\
	& & 0.3181 \odot (0.2, 0.2, 0.4, 0.5,0.5)\circplus 0.2004 \odot (0.3, 0.3, 0.5, 0.6, 0.9) \\
	& = & (0.07023,0.09364,0.09364,0.11705,0.11705)\circplus (0.02474,0.17318,0.17318,0.19792,0.22266) \circplus\\
	&  & (0.06362,0.06362,0.12724,0.15905,0.15905)\circplus (0.06012,0.06012,0.1002,0.12024,0.18036) \\
	& = & (0.21871,0.39056,0.49426,0.59426,0.67912)
	\\
	\end{eqnarray*}
	
	\begin{eqnarray*}
	s_2 & = & \mathcal{M}_\omega^{\mathcal{L}_n([0,1])}(R^*_{21}, \dots, R^*_{24}) \\
& = &  0.2341 \odot (0.3,0.3,0.3, 0.5, 0.5) \circplus 0.2474 \odot (0.2, 0.5, 0.6, 0.7, 0.9) \circplus \\
	& & 0.3181 \odot (0.1, 0.1, 0.5, 0.6, 0.8)\circplus 0.2004 \odot (0.3, 0.4, 0.4,0.7,0.7)\\
	& = & (0.07023,0.07023,0.07023,0.11705,0.11705)\circplus (0.04948,0.1237, 0.19792, 0.17318,0.22266) \circplus\\
	&  & (0.03181,0.03181,0.15905,0.19086,0.25448)\circplus (0.06012,0.08016,0.08016,0.14028,0.14028) \\
	& = & (0.21164,0.3059,0.50736,0.62137,0.73447) \\ \\
	s_3 & = & \mathcal{M}_\omega^{\mathcal{L}_n([0,1])}(R^*_{31}, \dots, R^*_{34}) \\
	& = &  0.2341 \odot (0.6,0.6,0.7, 0.7, 0.7) \circplus 0.2474 \odot (0.6, 0.6,0.6,0.6,0.9) \circplus \\
	& & 0.3181 \odot (0.3,0.3,0.5, 0.5, 0.7)\circplus 0.2004 \odot (0.4, 0.4,0.6,0.6,0.6) \\
	& = & (0.14046,0.14046,0.16387,0.16387,0.16387)\circplus (0.14844,0.14844,0.14844,0.14844,0.22266) \circplus  \\
	&  &  (0.09543,0.09543,0.15905,0.15905,0.22267)\circplus (0.08016,0.08016,0.12024,0.12024,0.12024)  \\
	& = &  (0.46449,0.46449,0.5916,0.5916,0.72944) \\ \\
	s_4 & = & \mathcal{M}_\omega^{\mathcal{L}_n([0,1])}(R^*_{41}, \dots, R^*_{44}) \\
	& = &  0.2341 \odot (0.3, 0.4, 0.7, 0.8,0.8) \circplus 0.2474 \odot (0.2, 0.4, 0.4,0.7, 0.7) \circplus \\
	& & 0.3181 \odot (0.1, 0.1,0.8,0.8,0.8)\circplus 0.2004 \odot (0.6,0.6, 0.8,0.8, 0.9) \\
	& = &  (0.07023,0.09364,0.16387,0.18728,0.18728)\circplus (0.03181,0.03181,0.09896,0.17318,0.17318) \circplus \\
	&  & (0.06362,0.06362,0.25448,0.25448,0.25448)\circplus (0.12024,0.12024,0.16032,0.16032,0.18036)  \\
	& = & (0.2859,0.30931,0.67763,0.77526,0.7953)
	\\\\
	s_5 & = & \mathcal{M}_\omega^{\mathcal{L}_n([0,1])}(R^*_{51}, \dots, R^*_{54}) \\
	& = &  0.2341 \odot (0.1, 0.3, 0.6, 0.7, 0.9) \circplus 0.2474 \odot (0.4, 0.6, 0.6, 0.7, 0.8) \circplus \\
	& & 0.3181 \odot (0.7,0.7, 0.8,0.8, 0.9))\circplus 0.2004 \odot (0.3, 0.6,0.6, 0.7, 0.9) \\
	& = & (0.02341,0.07023,0.14046,0.16387,0.21069)\circplus (0.09896,0.14844,0.14844,0.17318,0.19792) \circplus\\
	&  & (0.22267,0.22267,0.25448,0.25448,0.28629)\circplus (0.06012,0.12024,0.12024,0.14028,0.18036)  \\
	& = & (0.40516,0.56158,0.66362,0.73181,0.87526)
\end{eqnarray*}}

Thereby, we obtain the following ranking of the alternatives: $a_1 < a_2 < a_3 < a_5 < a_4$ (Step 6), since $s_1 \preceq_\tau s_2 \preceq_\tau s_3 \preceq_\tau s_5 \preceq_\tau s_4$; and thus the most desirable alternative is $a_4$, which differs from the best alternative obtained in \cite{XuZ}.
The problem with applying the method of Xu and Zhang, is that when two experts provide the same degree for the satisfaction of an alternative with respect to a criterion, they collapse in a single value in the hesitant decision matrix as, for example, with the evaluation of the alternative
$a_1$ with respect to the criterion $C_1$ done by the experts $e_1$ and $e_3$ (see Tables \ref{Table_e1} and \ref{Table_e3}).
Moreover, in the step 2 of the Xu and Zhang method, the least value is repeated in order to obtain vectors on $[0,1]$ of the same dimension. Thus, this process does not faithfully reflect the opinion of the experts, and therefore the ranking obtained will be biased. Indeed, if the expert $e_2$ changes the evaluation for the alternative $a_4$ with respect to $C_3$  to $0.1$, this change will not have any effect over the hesitant decision matrix in Table \ref{Table_Hesitant}, and therefore the final ranking will remain the same, whereas in the proposed method, the ranking will change.


\section{Conclusion}

In this paper, $m$-ary aggregation functions with respect to admissible orders on $n$-dimensional intervals, i.e. on $L_n([0,1])$, were studied,. Different classes of such functions have been considered, such as conjunctive, disjunctive, average, idempotent and symmetric  $m$-ary $n$-dimensional aggregation functions with respect to an arbitrary admissible order. On the other hand, the notion of ordered semi-vector spaces on the semifield of non-negative real numbers was extended for arbitrary weak semifields. In particular, an ordered semi-vector space for $L_n([0,1])$ where the order is admissible is given and OWA-like and weigthed average operators for $L_n([0,1])$ are defined. Then, we have investigated on admissible orders such that these operators are aggregation functions.
Finally, we have developed an application in a multicriteria group decision making method based on $n$-dimensional aggregation functions with respect to an admissible order. Moreover, an  illustrative example is considered where  the $m$-ary $n$-dimensional weigthed average aggregation function for a given admissible order is applied.  Some minimal desirable properties that all multiple criteria group decision making method must satisfy and that are preserved by the  proposed method are discussed.

As further work, we will investigate on new classes and on the way of constructing admissible order on $L_n([0,1])$. We also intend to study  $m$-ary $n$ dimensional aggregation  functions with respect to such admissible orders.  On the other hand, continuity is an important property in many applications, since it guarantees that small changes in the inputs of an aggregation function do not imply substantives changes of the results. So, in a future work a deeper study of continuous $n$-dimensional aggregation functions will be made, not only in the light of \cite{BedregalMR18,MezzomoBM18}, but also taking into account the admissible order which is being considered.

\section*{Acknowledgments} This work is partially supported by the  Brazilian National Council for Scientific and Technological Development CNPq under the Process 311429/2020-3 and by the Spanish Government through Research Project PID2019-108392GB-I00 \linebreak (AEI/10.13039/501100011033)

\end{document}